\let\oldmarginpar\marginpar
\renewcommand\marginpar[1]{\oldmarginpar[\raggedleft\footnotesize #1]%
{\raggedright\footnotesize #1}}
\renewcommand{\setminus}{{\smallsetminus}}
\newcommand{\HH}{{\mathbb{H}}}
\newcommand{\CC}{{\mathbb{C}}}
\newcommand{\bdy}{{\partial}}
\newcommand{\cut}{{\backslash \backslash}}
\newcommand{\abs}[1]{{\left\vert #1 \right\vert}}
\newcommand{\GA}{{\mathbb{G}_A}}
\newcommand{\GRA}{{\mathbb{G}'_A}}
\newcommand{\GRB}{{\mathbb{G}'_B}}
\def\co{\colon\thinspace}
\theoremstyle{plain}
\newtheorem{theorem}{Theorem}[section]
\newtheorem{lemma}[theorem]{Lemma}
\newtheorem{prop}[theorem]{Proposition}
\newtheorem*{namedtheorem}{\theoremname}
\newcommand{\theoremname}{testing}
\theoremstyle{definition}
\newtheorem{define}[theorem]{Definition}
\newtheorem{remark}[theorem]{Remark}
\newtheorem{convention}[theorem]{Convention}
\begin{document}
\title{Quasifuchsian state surfaces}
\author[D. Futer]{David Futer}
\author[E. Kalfagianni]{Efstratia Kalfagianni}
\author[J. Purcell]{Jessica S. Purcell}

\address[]{Department of Mathematics, Temple University,
Philadelphia, PA 19122, USA}

\email[]{dfuter@temple.edu}

\address[]{Department of Mathematics, Michigan State University, East
Lansing, MI 48824, USA}

\email[]{kalfagia@math.msu.edu}

\address[]{ Department of Mathematics, Brigham Young University,
Provo, UT 84602, USA}

\email[]{jpurcell@math.byu.edu }
\thanks{{D.F. is supported in part by NSF grant DMS--1007221.}}

\thanks{{E.K. is supported in part by NSF grant DMS--1105843.}}

\thanks{{J.P. is supported in part by NSF grant  DMS--1007437 and a Sloan Research Fellowship.}}

\thanks{ \today}

\begin{abstract}
This paper continues our study, initiated in \cite{fkp:gutsjp}, of
essential state surfaces in link complements that satisfy a mild
diagrammatic hypothesis (homogeneously adequate).  For hyperbolic links,
we show that the geometric type of these surfaces in the Thurston
trichotomy is completely determined by a simple graph--theoretic
criterion in terms of a certain spine of the surfaces. For links with
$A$-- or $B$--adequate diagrams, the geometric type of the surface is
also completely determined by a coefficient of the colored Jones
polynomial of the link.
\end{abstract}
\maketitle

%%%%%%%%%%%%%%%%%%%%%%%%%%%%%%%%%%%%%%%%%%%%%%%%%%%%%%%%%%%%%%%%%
\section{Introduction}\label{sec:intro}

A major goal in modern knot theory is to relate the geometry of a knot
complement to combinatorial invariants that are easy to read off  a diagram of the knot.  In a
recent monograph \cite{fkp:gutsjp}, we find connections between
geometric invariants of a knot or link complement, combinatorial properties of
its diagram, and stable coefficients of its colored Jones polynomials.
The bridge among these different invariants consists of \emph{state
  surfaces} associated to Kauffman states of a link diagram
\cite{KaufJones}.  These surfaces lie in the link complement and are
naturally constructed from a diagram, while certain graphs that form a
spine for these surfaces aid in the computation of Jones polynomials
\cite{dasbach-futer...}.

In this paper, we continue the study of these state surfaces, with the
goal of obtaining additional geometric information on a link
complement, and relating it back to diagrammatical and quantum
invariants of the link.  In particular, we establish combinatorial
criteria that characterize the geometric types of state surfaces
in the Thurston trichotomy.  This trichotomy, proved by Thurston
\cite{thurston:notes} and Bonahon \cite{bonahonends}, asserts that
every essential surface in a hyperbolic 3-manifold fits into exactly
one of three types: semi-fiber, quasifuchsian, or accidental. (See
Definition \ref{def:types} below for details.)  We show that under a
mild diagrammatic hypothesis, certain state surfaces will never be
accidental, and a simple graph--theoretic property determines
whether the state surface is a semi-fiber or quasifuchsian. For the class of $A$-- or $B$--adequate diagrams,
which arise in the study of knot polynomial invariants \cite{lick-thistle,
  thi:adequate}, the geometric type of the surface 
is determined by a single coefficient of the colored Jones polynomials of the knot.

The problem of determining the geometric types of essential surfaces in
knot and link complements has been studied fairly well in the literature. For example, Menasco and Reid proved that no alternating link
complement contains an embedded quasifuchsian closed surface
\cite{MenascoReid}, which led to the result that there are no embedded
totally geodesic surfaces in alternating link complements.  More
recently, Masters and Zhang found closed,  immersed
quasifuchsian surfaces in any hyperbolic link complement
\cite{MastersZhang}.

Turning to surfaces with boundary, it is known that all three geometric types occur in hyperbolic link complements.
For example, 
Tsutsumi constructed hyperbolic knots with  accidental {S}eifert surfaces of
arbitrarily high genus  \cite{tsutsumi}. On the other hand, Fenley proved that minimal genus
Seifert surfaces cannot be accidental \cite{fenley:qf-seifert}.  An
alternate proof of this was given by Cooper and Long
\cite{cooper-long}.
Adams showed that checkerboard
surfaces in alternating link complements are quasifuchsian
\cite{adams:quasi-fuchsian}.  Here we give an alternate
proof of this fact, and provide broad  families of non-accidental surfaces  constructed from non-alternating diagrams.

The results of this paper have some direct consequences in hyperbolic geometry.
 First, they dovetail with recent
work of Thistlethwaite and Tsvietkova, who gave an algorithm
to construct the hyperbolic structure on a link complement directly
from a diagram \cite{thist-tsviet, tsvietthesis}.  Their algorithm
works whenever a link diagram admits a non-accidental state surface, which is exactly what our results ensure
for a very large class of
diagrams.  Second, the quasifuchsian surfaces that we construct fit into the machinery developed by Adams
\cite{adams:quasi-fuchsian}.  He showed that if a cusped hyperbolic
manifold contains a properly embedded quasifuchsian surface with
boundary, then there are restrictions on the cusp geometry of that
manifold.

\subsection{Definitions and main results}\label{sec:defi-main}

To describe our results precisely, we need some definitions.  As we
will be working with both orientable and non-orientable surfaces, we
need to clarify the notion of an essential surface.

\begin{define}\label{def:essential}
Let $M$ be an orientable $3$--manifold and $S \subset M$ a properly
embedded surface.  We say that $S$ is \emph{essential} in $M$ if the
boundary of a regular neighborhood of $S$, denoted $\widetilde{S}$, is
incompressible and boundary--incompressible.
\end{define}

Note that if $S$ is orientable, then $\widetilde{S}$ consists of two
copies of $S$, and the definition is equivalent to the standard notion
of ``incompressible and boundary--incompressible'' for orientable
surfaces.

\begin{define}\label{def:types}
Let $M$ be a compact $3$--manifold with boundary consisting of tori,
and let $S$ be a properly embedded essential surface in $M$.  An
\emph{accidental parabolic} on $S$ is a free homotopy class of a
closed curve that is not boundary--parallel on $S$ but can be
homotoped to the boundary of $M$.  If $M$ is hyperbolic, then the
embedding of $S$ into $M$ induces a faithful representation $\rho \co
\pi_1(S) \hookrightarrow \pi_1(M) \subset PSL(2, \CC)$.  In this case,
an \emph{accidental parabolic} is a non-peripheral element of
$\pi_1(S)$ that is is mapped by $\rho$ to a parabolic in $\pi_1(M)$.
A surface $S$ with accidental parabolics is called \emph{accidental}.

If $M$ is hyperbolic, the surface $S$ is called \emph{quasifuchsian}
if the embedding $S \hookrightarrow M$ lifts to a topological plane in
${\HH}^3$ whose limit set $ \Lambda \subset \partial {\HH}^3$ is a
topological circle.  Note that we permit $S$ to be non-orientable: in
this case, the two disks bounded by the Jordan curve $\Lambda$ will be
be interchanged by isometries corresponding to $\pi_1(S)$.

Finally, we say the surface $S$ is a \emph{semi-fiber} if it is a
fiber in $M$ or covered by a fiber in a two-fold cover of $M$. If $S$ is a semi-fiber but not a fiber, we call it a \emph{strict} semi-fiber.
\end{define}

By the work of Thurston \cite{thurston:notes} and Bonahon
\cite{bonahonends} (see also Canary, Epstein and Green
\cite{canarynotes}), every properly embedded, essential surface $S$ in
a hyperbolic 3--manifold $M$ falls into exactly one of the three types
in Definition \ref{def:types}: $S$ is either a semi-fiber, or
accidental, or quasifuchsian.

We will apply the above definitions to surfaces constructed from
\emph{Kauffman states} of link diagrams.  For any crossing of a link
diagram $D(K)$, there are two associated diagrams, obtained by
removing the crossing and reconnecting the diagram in one of two ways,
called the \emph{$A$--resolution} and \emph{$B$--resolution} of the
crossing, shown in Figure \ref{fig:splicing}.

\begin{figure}
	\centerline{\input{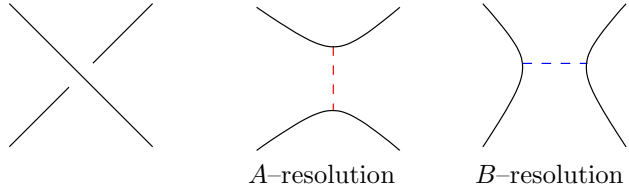}}
\caption{$A$-- and $B$--resolutions of a crossing.}
\label{fig:splicing}
\end{figure}

A choice of $A$-- or $B$--resolution for each crossing of $D$ is
called a \emph{Kauffman state} \cite{KaufJones}. The result of
applying a Kauffman state $\sigma$ to a link diagram $D$ is a
collection of circles $s_\sigma$ disjointly embedded in the projection
plane $S^2\subset S^3$.  These circles bound embedded disks whose
interiors can be made disjoint by pushing them below the projection
plane.  Now, at each crossing of $D$, we connect the pair of
neighboring disks by a half--twisted band to construct a \emph{state
  surface} $S_\sigma \subset S^3$ whose boundary is $K$.

State surfaces generalize the classical checkerboard knot surfaces,
and they have recently appeared in the work of several authors,
including Przytycki \cite{przytycki-survey} and Ozawa \cite{ozawa}.
They are the primary object of interest in this paper, for certain
states.  In order to describe these states, we need a few more
definitions.

From the collection of state circles $s_\sigma$ we obtain a trivalent
graph $H_\sigma$ by attaching edges, one for each crossing of the
original diagram $D(K)$, as shown by the dashed lines of Figure
\ref{fig:splicing}.  As in \cite{fkp:gutsjp}, the edges of $H_\sigma$
that come from crossings of the diagram are referred to as
\emph{segments}, and the other edges are portions of state circles.
See Figure \ref{fig:state-sfc-ex}.

\begin{figure}
  \includegraphics{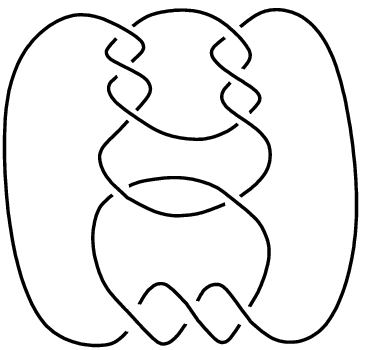} \hspace{.2in}
  \includegraphics{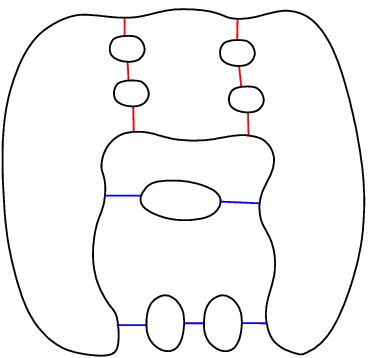} \hspace{.2in}
  \includegraphics{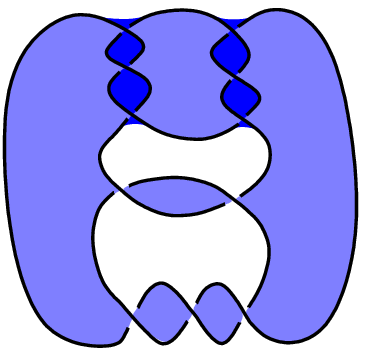}
  \caption{Left: An example link diagram.  Middle: the graph
    $H_\sigma$ corresponding to an adequate, homogeneous state
    $\sigma$. Red edges are $A$--resolutions and blue edges are
    $B$--resolutions.  Right: state surface $S_\sigma$.}
  \label{fig:state-sfc-ex}
\end{figure}

In the literature, a graph that is more common than the graph
$H_\sigma$ is the \emph{state graph} $G_\sigma$, which is formed from
$H_\sigma$ by collapsing components of $s_\sigma$ to vertices.  Remove
redundant edges between vertices to obtain the \emph{reduced state
  graph} $G'_\sigma$.

\begin{define}\label{def:adequate}  
Following Lickorish and Thistlethwaite \cite{lick-thistle,
  thi:adequate}, a state $\sigma$ of a diagram $D$ is said to be
\emph{adequate} if every segment of $H_\sigma$ has its endpoints on
distinct state circles of $s_\sigma$.  In this case, the diagram $D$
is called $\sigma$--adequate.  When $\sigma$ is the all--$A$ state
(all--$B$ state), we call the diagram $A$--adequate ($B$--adequate).

In any state $\sigma$, the circles of $s_\sigma(D)$ divide the
projection plane into components.  Every crossing of $D$ is associated
to a segment of $H_\sigma$, which belongs to one of these components.
Label each segment $A$ or $B$, in accordance with the choice of
resolution at this crossing.  We say that the state $\sigma$ is
\emph{homogeneous} if all edges in a complementary region of
$s_\sigma$ have the same $A$ or $B$ label.  In this case, we say that
$D$ is \emph{$\sigma$--homogeneous}.  An example is shown in Figure
\ref{fig:state-sfc-ex}.  If a link $K$ admits a diagram that is both
$\sigma$--homogeneous and $\sigma$--adequate, for the same state
$\sigma$, we call $K$ \emph{homogeneously adequate}.
\end{define}

Ozawa showed that the state surface $S_\sigma$ of an adequate,
homogeneous state $\sigma$ is essential in the link complement
\cite{ozawa}.  A different proof of this fact follows from machinery
developed by the authors \cite{fkp:gutsjp}.  The state surfaces $S_A$
and $S_B$ corresponding to the all--$A$ and all--$B$ states,
respectively, also play a significant role in quantum topology.  In
\cite{fkp:gutsjp}, we show that coefficients of the colored Jones
polynomials detect topological information about these surfaces.  For
instance, if $K$ is an $A$--adequate link then $S_A$ is a fiber in the
link complement precisely when a particular coefficient vanishes (and
similarly for $S_B$).

In this paper, we show that for hyperbolic link complements, the
colored Jones polynomial completely determines the geometric type of
$S_A$ in the Thurston trichotomy of Definition \ref{def:types}.  To
state our result, let
$$ J^n_K(t)= \alpha_n t^{m_n}+ \beta_n t^{m_n-1}+ \ldots + \beta'_n
t^{r_n+1}+ \alpha'_n t^{r_n},
$$ denote the $n$-th \emph{colored Jones polynomial} of a link $K$,
where $m_n$ and $r_n$ denote the highest and the lowest degree.
Recall that $J^2_K(t)$ is the usual Jones polynomial.  Suppose that
$K$ is a link admitting an $A$--adequate diagram $D$.  Consider the
all--$A$ state graph $\GA$ and the reduced graph $\GRA$.  By
\cite{lick-thistle, thi:adequate, dasbach-lin:head-tail}, for all
$n>1$, we have $\abs{\alpha'_n}=1$ and $ \abs{\beta'_n} = 1 -
\chi(\GRA)$. Thus we may define the \emph{stable coefficient}
\begin{equation}\label{eq:stable-value}
\beta_K'\: := \: \abs{\beta'_n} \: = \: 1 - \chi(\GRA).
\end{equation}
Similarly, if $D$ is $B$--adequate, then $\abs{\alpha_n}=1$ and
$\abs{\beta_n} = 1 - \chi(\GRB)$, hence there is a stable coefficient
$\beta_K := \abs{\beta_n} \: = \: 1 - \chi(\GRB) = 1 - \chi(\GRB)$.

Finally, recall that a link diagram $D$ is called \emph{prime} if any
simple closed curve that meets the diagram transversely in two points
bounds a region of the projection plane without any crossings.  A
prime knot or link admits a prime diagram. 

One of our results is the following theorem.

\begin{theorem}\label{thm:qsfjp}
Let $D(K)$ be a prime, $A$--adequate diagram of a hyperbolic link
$K$. Then the stable coefficient $\beta'_K$ determines the
geometric type of the all--$A$ surface $S_A$, as follows:
\begin{itemize}
\item If $\beta'_K=0$, then $S_A$ is a fiber in $S^3 \setminus K$.
\item If $\beta'_K\neq 0$, then $S_A$ is quasifuchsian.
\end{itemize}
\end{theorem}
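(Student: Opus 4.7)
The plan is to deduce the theorem from the Thurston--Bonahon trichotomy together with the graph-theoretic information about $S_A$ already established in \cite{fkp:gutsjp}. Because $D$ is $A$-adequate, the all-$A$ state $\sigma$ is adequate, and it is tautologically homogeneous since every crossing is resolved with the $A$-label. Thus the hypotheses of \cite{fkp:gutsjp} (and of Ozawa \cite{ozawa}) apply, and $S_A$ is essential in $S^3\setminus K$; the Thurston--Bonahon trichotomy then forces $S_A$ into exactly one of the three geometric types. By equation \eqref{eq:stable-value} and the connectedness of $\GRA$, the condition $\beta'_K=0$ is equivalent to $\chi(\GRA)=1$, hence to $\GRA$ being a tree. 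From \cite{fkp:gutsjp} we already know that $S_A$ is a fiber in $S^3\setminus K$ precisely when $\GRA$ is a tree, which handles the first bullet.

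For the second bullet, I need to show that when $\GRA$ is not a tree, $S_A$ is quasifuchsian; by the trichotomy this means ruling out both the strict semi-fiber case and the accidental case. The strict semi-fiber case I would exclude using the guts technology of \cite{fkp:gutsjp}. A strict semi-fiber would force the complement of $S_A$ in $S^3\setminus K$ to fiber as a twisted $I$-bundle, so that the guts would be empty. But the guts-vs.-tree computation of \cite{fkp:gutsjp} shows that the guts are empty exactly when $\GRA$ is a tree, contradicting $\beta'_K\neq 0$.

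The heart of the argument is excluding accidental parabolics. My plan here is to work inside the polyhedral/normal-surface decomposition of $S^3\setminus S_A$ from \cite{fkp:gutsjp} and show that any essential closed curve on $S_A$ which is parabolic in $\pi_1(S^3\setminus K)$ must in fact be peripheral on $S_A$. The idea is to take a candidate accidental annulus joining such a loop to the cusp, put it in normal form with respect to the polyhedral decomposition, and use the $A$-adequacy of $\sigma$ together with the primeness of $D$: adequacy guarantees that no segment of $H_\sigma$ has both endpoints on the same state circle, and primeness rules out short curves on the projection sphere that would create essential annuli. A careful combinatorial case analysis of the normal disks of the annulus should then force it to be boundary-parallel on $S_A$.

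The main obstacle I anticipate is the non-accidentality step. The fiber characterization and the exclusion of strict semi-fibers reduce to existing \emph{guts} calculations from \cite{fkp:gutsjp}, but ruling out accidental parabolics requires a genuinely new normal-surface argument where primeness is used in an essential way. Once non-accidentality is in hand, the Thurston--Bonahon trichotomy together with the two exclusions completes the proof.
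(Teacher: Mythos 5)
Your reductions are exactly the ones the paper makes: $A$-adequacy gives an adequate, (tautologically) homogeneous state, so $S_A$ is essential; $\beta'_K=0$ iff $\chi(\GRA)=1$ iff $\GRA$ is a tree, which by \cite{fkp:gutsjp} is equivalent to $S_A$ being a fiber; the strict semi-fiber case is excluded because a semi-fiber forces $M_\sigma$ to be an $I$--bundle, which again forces $\GRA$ to be a tree; and the remaining task is to rule out accidental parabolics, after which the Thurston--Bonahon trichotomy finishes the argument. All of that is correct and is precisely how the paper derives Theorem \ref{thm:qsfjp} from Theorem \ref{thm:quasifuch}.

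The genuine gap is that your third step --- excluding accidental parabolics --- is only a statement of intent, and it is where essentially all of the content of the paper lives (Sections \ref{sec:accidental}--\ref{sec:links}). ``A careful combinatorial case analysis of the normal disks should force the annulus to be boundary-parallel'' is not an argument, and the actual proof is not a routine case analysis. Concretely, you are missing: (i) the use of the annulus theorem (Lemma \ref{lemma:annulus}) to upgrade the free homotopy to an \emph{embedded} essential annulus in $M\cut S_A$ with one boundary circle on the parabolic locus; (ii) the key observation that, after normalizing, each piece of the annulus in a polyhedron is a square with one side on an ideal vertex, so its arc in a white face runs from an ideal vertex to a side --- for \emph{knots} this already contradicts condition (v) of normality (Theorem \ref{thm:noaccidental-knot}), with no primeness needed; and (iii) for \emph{links}, where the annulus need not meet every ideal vertex of a white face, a much longer argument via the clockwise map $\phi$ transferring squares between the upper and lower polyhedra (Lemma \ref{lemma:clockwise}), the Utility/Downstream/Parallel Stairs lemmas of \cite{fkp:gutsjp}, and primeness, culminating not in boundary-parallelism of the annulus but in the conclusion that $D(K)$ is the standard $(2,n)$ torus link diagram --- which contradicts hyperbolicity (and makes $S_\sigma$ an annulus, so the curve is peripheral). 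Your guess about where primeness enters (``short curves on the projection sphere'') is also too vague to be checkable; in the paper it is used only in the link case, in Lemma \ref{lemma:2q-link}, to kill extra crossings inside certain closed curves meeting the diagram twice. Without supplying these steps, the second bullet of the theorem is not proved.
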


Similarly, if $D(K)$ is a prime $B$--adequate diagram of a hyperbolic
link $K$, then the stable coefficient $\beta_K$ determines the
geometric type of $S_B$. This surface will be a fiber if $\beta_K =
0$, and quasifuchsian otherwise.

\begin{remark} 
The class of $A$-- or $B$--adequate links includes all alternating
links, positive and negative closed braids, closed 3--braids,
Montesinos links, Conway sums of alternating tangles and planar cables
of all the above. It also includes all but a handful of prime knots up
to 12 crossings.  See \cite[Section 1.3]{fkp:gutsjp} for more
discussion and references.  The class of homogeneously adequate links
includes all of the above and also contains the homogeneous links
studied by Cromwell \cite{cromwell}.

We note that the class of homogeneously adequate links is strictly
larger than that of $A$-- and $B$--adequate links: For example,
consider the knot $K=12n0873$ of Knotinfo \cite{knotinfo}.  Its Jones
polynomial $
J_{K}(t)=3t^{-4}-7t^{-3}+11t^{-2}-14t^{-1}+15-14t+11t^2-7t^3+3t^4$ is
not monic, hence $K$ is neither $A$-- nor $B$--adequate.  On the other
hand, according to \cite{knotinfo}, $K$ is written as the closure of
the homogeneous braid $ b=\sigma_1 \sigma_2
\sigma_3^{-1}\sigma_4^{-1}\sigma_2 \sigma_3^{-1} \sigma_1\sigma_2
\sigma_3^{-1}\sigma_2\sigma_4^{-1}\sigma_3^{-1},$ where $\sigma_i$
denotes the $i$-th standard generator of the 5--string braid group.
It is not hard to see that the Seifert state of the closed braid
diagram is homogeneous and adequate.

At this writing, it is not known whether \emph{every} hyperbolic link
admits a homogeneously adequate diagram.  See \cite{ozawa} and
\cite[Chapter 10]{fkp:gutsjp} for related discussion and questions.
\end{remark}

The main result of the paper is the following theorem.

\begin{theorem}\label{thm:quasifuch}
Let $D(K)$ be a prime link diagram with an adequate, homogeneous state
$\sigma$.  Then the state surface $S_\sigma$ is essential, and admits
no accidental parabolics.  Furthermore, $S_\sigma$ is a semi-fiber
whenever it is a fiber, which occurs if and only if $G'_\sigma$ is a
tree.
\end{theorem}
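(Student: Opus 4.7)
The overall plan is to build on the polyhedral decomposition of $M_\sigma := (S^3 \setminus K) \cut S_\sigma$ constructed in \cite{fkp:gutsjp}, together with the associated decomposition of $M_\sigma$ into an $I$--bundle piece and a ``guts'' piece. Essentiality of $S_\sigma$ is already known from Ozawa \cite{ozawa} or from \cite{fkp:gutsjp}, so the real content is threefold: (a) $S_\sigma$ admits no accidental parabolics, (b) $S_\sigma$ is a fiber if and only if $G'_\sigma$ is a tree, and (c) if $S_\sigma$ is a semi--fiber then it is actually a fiber.

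For (b) and (c), recall that the monograph expresses the Euler characteristic of the guts of $M_\sigma$ in terms of cycles in $G'_\sigma$, with guts empty precisely when $G'_\sigma$ is a tree. If $G'_\sigma$ is a tree, then $M_\sigma$ is entirely an $I$--bundle; inspecting the polyhedral decomposition, this $I$--bundle is a product $S_\sigma \times I$, and the product structure assembles into a surface bundle structure on $S^3 \setminus K$ with fiber $S_\sigma$. Conversely, if $S_\sigma$ is any semi--fiber, then $M_\sigma$ is an $I$--bundle, so its guts is empty, and the guts formula forces $G'_\sigma$ to be a tree; the previous sentence then upgrades the $I$--bundle structure to a product, ruling out strict semi--fibers. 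Combined with (a) and the Thurston trichotomy, this yields the dichotomy between fiber and quasifuchsian.

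For (a), suppose $\gamma \subset S_\sigma$ is a non--peripheral simple closed curve homotopic in $S^3 \setminus K$ into $\D N(K)$. Then there is a properly embedded essential annulus $A \subset S^3 \setminus K$ with $\bdy A = \gamma \cup \gamma'$, where $\gamma' \subset \D N(K)$. Cut $A$ along $S_\sigma$ to produce essential disks and annuli in $M_\sigma$; each piece has boundary on the shaded faces of $\bdy M_\sigma$ (arising from $S_\sigma$) and on the parabolic locus (arising from $\D N(K)$). Put these pieces in normal form with respect to the polyhedral decomposition. The resulting normal disks are essential parabolic compression disks of the type analyzed in \cite[Chapters~5--6]{fkp:gutsjp}. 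The classification there, together with the adequacy and homogeneity of $\sigma$, forces each such disk to record a short cycle in $H_\sigma$ connecting a state circle to a strand of $K$; primeness of $D(K)$ excludes every such configuration except those in which $\gamma$ is peripheral on $S_\sigma$, contradicting the choice of $\gamma$.

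The main obstacle is the combinatorial step in (a): extending the essential--product--disk machinery of \cite{fkp:gutsjp}, which controls disks whose boundary lies entirely on shaded faces, to parabolic compression disks whose boundary also hits the parabolic locus on the cusp. The argument must catalog the additional types of normal pieces permitted when one boundary of $A$ lives on $\D N(K)$, and show by a uniform appeal to primeness plus homogeneity that each such piece either corresponds to a peripheral loop on $S_\sigma$ or produces a forbidden simple closed curve crossing the diagram twice.
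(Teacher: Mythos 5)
Your overall architecture matches the paper's: essentiality and the fiber/tree criterion are quoted from \cite{fkp:gutsjp}, an accidental parabolic is converted into an embedded annulus meeting the parabolic locus, and that annulus is normalized with respect to the polyhedral decomposition and killed by a combinatorial argument that uses primeness. Your treatment of the semi-fiber clause also agrees with the paper (semi-fiber $\Rightarrow M_\sigma$ is an $I$--bundle $\Rightarrow G'_\sigma$ is a tree $\Rightarrow$ fiber, via \cite[Theorem 5.21]{fkp:gutsjp}). But part (a) has a genuine gap exactly where you flag ``the main obstacle'': you do not carry out the combinatorial analysis, and the machinery you propose to import does not apply. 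The normal pieces produced by the accidental annulus are squares with two sides on white faces, one side on a shaded face, and one side on the parabolic locus; the essential--product--disk classification of \cite[Chapters 5--6]{fkp:gutsjp} only treats disks whose boundary lies on shaded faces and ideal vertices, so the step ``the classification there \dots forces each such disk to record a short cycle'' is not one you can actually take. The paper must prove new statements about the clockwise map for these parabolic squares (parts \eqref{i:3} and \eqref{i:4} of Lemma \ref{lemma:clockwise}) and then run a fresh sequence of lemmas.

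Moreover, your predicted endgame is not how the contradiction actually arises. The paper does not show that each individual normal piece is either peripheral or yields a curve meeting the diagram twice. The argument is global: one first shows the annulus consists of at least four squares and meets a white face with at least four sides (Proposition \ref{prop:more4}, using the Escher Stairs, Utility, Downstream and Parallel Stairs lemmas of the monograph); then that \emph{all} of the squares pass through the same two white faces $V$ and $W$ of a single polyhedral region (Lemma \ref{lemma:same-white-faces}); then that each square encircles a bigon shaded face of the corresponding lower polyhedron (Lemma \ref{lemma:encircle-bigons}); and finally --- this is where primeness enters --- that $V$ and $W$ are the only white faces at all, so that $D(K)$ is the standard $(2,n)$ torus link diagram and $S_\sigma$ is an annulus, on which every essential simple closed curve is peripheral (Lemma \ref{lemma:2q-link}). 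A smaller but real issue: you take for granted that the accidental parabolic is realized by a simple closed curve $\gamma$ bounding an embedded annulus in $S^3 \setminus K$ with $\gamma$ itself as one boundary component. The correct statement requires the annulus theorem of Jaco and produces an embedded essential annulus in $M \cut S_\sigma$ whose boundary component on $\widetilde{S_\sigma}$ need not be the curve you started with; one must also rule out the possibility that this annulus lives in the $I$--bundle over $S_\sigma$, as is done in Lemma \ref{lemma:annulus}.
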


Theorem \ref{thm:qsfjp} follows immediately from Theorem
\ref{thm:quasifuch}: simply restrict to $A$--adequate diagrams, and
note that equation \eqref{eq:stable-value} above implies $\beta'_K =
0$ precisely when $\GRA$ is a tree.

The result that checkerboard surfaces in hyperbolic alternating link
complements are quasifuchsian (cf \cite{adams:quasi-fuchsian}) also
follows immediately from Theorem \ref{thm:quasifuch}.  This is because
checkerboard surfaces correspond to the all--$A$ and all--$B$ states
of alternating link complements, which are always homogeneous and
adequate, and the corresponding graphs $G'_A$ and $G'_B$ will be
trees only when the reduced alternating diagram of the link is a
$(2,q)$ torus link, which is not hyperbolic. 
  
The main novel content of Theorem \ref{thm:quasifuch} is that
$S_\sigma$ is never accidental.  Indeed, in \cite[Theorem
  5.21]{fkp:gutsjp}, we showed that $S_\sigma$ is a fiber precisely
when the reduced state graph $G'_\sigma$ is a tree and that it is
never a strict semi-fiber.  Thus, by Thurston and Bonahon
\cite{bonahonends}, for a hyperbolic link $K$ the surface $S_{\sigma}$
is quasifuchsian precisely when $G'_\sigma$ is not a tree.

%% Theorem \ref{thm:quasifuch} fits nicely into various recent work:  For
%% example, when $K$ is hyperbolic, the existence of a quasifuchsian
%% surface gives additional geometric information on the link complement,
%% due to work of Adams \cite{adams:quasi-fuchsian}.  As another example,
%% Thistlethwaite and Tsvietkova \cite{thist-tsviet, tsvietthesis} have
%% proposed an algorithm to construct the hyperbolic structure of a link
%% complement starting from a diagram. Their algorithm works for diagrams
%% that are \emph{taut}, which means that a state surface is essential
%% without accidental parabolics.  As noted by the authors
%% \cite{thist-tsviet}, their method is immediately applicable to
%% alternating diagrams, since checkerboard surfaces of alternating
%% projections are known to be essential and non-accidental
%% \cite{adams:quasi-fuchsian}.  Our Theorem \ref{thm:quasifuch} provides
%% an alternative proof of the result for alternating diagrams and
%% generalizes it to all homogeneously adequate diagrams.  As a
%% consequence, the algorithm of \cite{thist-tsviet} is now applicable to
%% much larger classes of links.

\subsection{Organization}
In Section \ref{sec:accidental}, we discuss accidental parabolic
elements in the fundamental group of a state surface.  We observe that
the existence of such elements gives rise to an essential embedded
annulus in the complement of the state surface, and then exclude such
annuli in in the case where $K$ is a knot (see Theorem
\ref{thm:noaccidental-knot}).  This, in particular, implies the main
results for knots.

Proving Theorem \ref{thm:quasifuch} in the more general case of links
is harder, and involves knowing more details about the complement of
the state surface.  In Section \ref{sec:polyhedra}, we describe the
structure of an ideal decomposition of the state surface complement,
which was first constructed in \cite{fkp:gutsjp}.  In Section
\ref{sec:links}, we study normal annuli in this polyhedral
decomposition, and prove that such an annulus can never realize an
accidental parabolic.  We expect that some of the combinatorial
results established in Section \ref{sec:links} will also prove useful
for studying more general essential surfaces in the complements of
homogeneously adequate links.

%%%%%%%%%%%%%%%%%%%%%%%%%%%%%%%%%%%%%%%%%%%%%%%%%%%%%%%%%%%%%%%%%
\section{Embedded annuli and knots}\label{sec:accidental}

In this section, we prove that if an essential state surface
$S_\sigma$ has an accidental parabolic, that is, if a non-peripheral
curve in $S_\sigma$ is homotopic to the boundary, then such a homotopy
can be realized by an embedded annulus. This will quickly lead to a
proof of Theorem \ref{thm:quasifuch} in the special case where $K$ is
a knot.

\begin{define}\label{def:cut}
Let $M$ be a compact orientable $3$--manifold with $\bdy M$ consisting
of tori, and $S \subset M$ a properly embedded surface. We use the
notation $M \cut S$ to denote the path--metric closure of $M \setminus
S$. Up to homeomorphism, $M \cut S$ is the same as the complement of a
regular neighborhood of $S$.

The \emph{parabolic locus} $P$ is the portion of $\bdy M$ that remains
in $\bdy(M \cut S)$. If every torus of $\bdy M$ is cut along $S$, then
the parabolic locus $P$ will consist of annuli.  Otherwise, it will
consist of annuli and tori.  The remaining, non-parabolic boundary
$\bdy (M \cut S) \setminus \bdy M$ can be identified with
$\widetilde{S}$, the boundary of a regular neighborhood of $S$.  In
the special case where $M = S^3 \setminus K$ is a link complement and
$S = S_\sigma$ is a state surface, we use the notation $M_\sigma$ to
refer to $M \cut S_\sigma = (S^3\setminus K)\cut S_\sigma = S^3\cut
S_\sigma$.
\end{define}

The following lemma recounts a standard argument.  It should be
compared, for example, to \cite[Lemma 2.1]{cooper-long}.

\begin{lemma}\label{lemma:annulus}
Let $M$ be a compact orientable $3$--manifold with $\bdy M$ consisting
of tori.  Let $S \subset M$ be a properly embedded essential surface
such that $\bdy S$ meets every component of $\bdy M$.  If $S$ has an
accidental parabolic, then there is an embedded essential annulus $A
\subset M \cut S$ with one boundary component on $\widetilde{S}$ and
the other on the parabolic locus $P = \bdy M \cut \bdy S$.
Furthermore, the component $\bdy A \subset P$ is parallel to a
component of $\bdy \widetilde{S}$.
\end{lemma}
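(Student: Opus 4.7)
The plan is to realize the homotopy in the definition of accidental parabolic as a singular annulus in $M$, simplify its intersections with $S$ using the essentiality hypothesis, pass to $M \cut S$, and finally upgrade to an embedded annulus via the annulus theorem.

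First I would translate the accidental parabolic into data: a non-peripheral simple closed curve $\gamma \subset S$ together with a continuous map $h \co S^1 \times I \to M$ satisfying $h(S^1 \times \{0\}) = \gamma$ and $h(S^1 \times \{1\}) \subset \partial M$. Because $\partial S$ meets every torus of $\partial M$, the curve $h(S^1 \times \{1\})$ lies in a single annular component $A_0 \subset P$. After a small perturbation to make $h$ transverse to $S$, the preimage $h^{-1}(S)$ is a properly embedded $1$--manifold in $S^1 \times I$ consisting of $S^1 \times \{0\}$ together with closed curves and arcs in the interior whose endpoints must lie on $S^1 \times \{0\}$, since $h(S^1 \times \{1\})$ misses $S$.

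Next, I would simplify $h^{-1}(S)$ by standard surgeries. An innermost closed curve in the interior bounds a disk $D \subset S^1 \times I$ whose image is a disk in $M$ with boundary on $S$; by incompressibility of $\widetilde{S}$, this loop bounds a disk on $S$, and I can isotope $h$ across to remove the component. An outermost arc cobounds a half-disk with a sub-arc of $S^1 \times \{0\}$; the entire boundary of this half-disk maps into $S$, so incompressibility applies again and the arc can be removed. After finitely many reductions, the remaining closed curves of $h^{-1}(S)$ are essential in $S^1 \times I$, hence parallel to the boundary; replacing $\gamma$ with the $h$-image of the outermost such curve and truncating $h$, I may assume $h^{-1}(S) = S^1 \times \{0\}$. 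I would then push $h(S^1 \times \{0\})$ into a normal collar of $S$, so that $h$ factors through $M \cut S$ with boundary on $\widetilde{S} \sqcup A_0$, giving a singular essential annulus in $M \cut S$. Since $S$ is essential and the ambient $M$ is irreducible with incompressible torus boundary, $M \cut S$ is irreducible and $\partial$--irreducible. The annulus theorem of Jaco--Shalen and Johannson then upgrades this singular annulus to an embedded essential annulus $A \subset M \cut S$ with one boundary component on $\widetilde{S}$ and one on $A_0$. The boundary $\partial A \cap A_0$ must be essential in the annulus $A_0$ (otherwise $A$ would be compressible in $M \cut S$), so it is a core curve of $A_0$, and therefore parallel to each component of $\partial A_0 \subset \partial \widetilde{S}$.

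The main obstacle I expect is the non-orientable case: when $S$ is non-orientable, $\widetilde{S}$ is a connected double cover of $S$ rather than two parallel copies, and translating compression disks for $\widetilde{S}$ into disks on $S$ requires an additional two-sidedness argument. The curve $\gamma$ must also be verified to be two-sided on $S$, so that pushing it off $S$ produces an annulus rather than a M\"obius band; this follows because $\gamma$ is freely homotopic in $M$ to a curve on the orientable surface $\partial M$, forcing its normal bundle to be trivial.
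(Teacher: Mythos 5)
Your overall strategy---realize the accidental parabolic as a singular annulus, clean up its intersections with $S$ using essentiality, pass to an outermost piece, and invoke the annulus theorem to get an embedded essential annulus in $M \cut S$---is the same as the paper's. But there is one genuine gap, and it sits exactly where the hypothesis of boundary--incompressibility must be used. You assert at the outset that $h(S^1 \times \{1\})$ lies in a single annular component $A_0$ of the parabolic locus, justifying this only by the fact that $\bdy S$ meets every torus of $\bdy M$. That fact guarantees the components of $P$ are annuli; it does not guarantee the image curve can be homotoped off $\bdy S$. A priori the accidental curve is carried to an arbitrary essential slope on the torus, and if that slope is not the slope of $\bdy S$ then $h(S^1\times\{1\})$ must cross $\bdy S$, so $h^{-1}(S)$ contains arcs with endpoints on $S^1\times\{1\}$---precisely the case your surgery step declares impossible. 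The paper handles these arcs by taking an outermost one, observing that it cobounds a boundary--compression half-disk for $\widetilde{S}$, and invoking boundary--incompressibility to remove it; only after that step is the $\bdy M$ boundary of the annulus disjoint from $\bdy S$, and this is also what ultimately forces the ``furthermore'' conclusion that $\bdy A \cap P$ is parallel to $\bdy\widetilde{S}$. The fact that your argument never appeals to boundary--incompressibility is the symptom of this omission.

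The remaining differences are minor. You compress away arcs of $h^{-1}(S)$ ending on $S^1\times\{0\}$ by half-disks, whereas the paper first pushes $\gamma$ into the bicollar of $S$ so that no such arcs occur; both work, though yours should be phrased for $\widetilde{S}$ in the non-orientable case, as you note. You factor the final singular annulus through $M\cut S$ by pushing $\gamma$ into the collar, while the paper cuts along $\widetilde{S}$ and rules out the $I$--bundle component; these are equivalent. One small inaccuracy: $M \cut S$ need not be $\bdy$--irreducible (in the intended application it is a handlebody); what the annulus theorem actually requires is incompressibility of the surfaces $\widetilde{S}$ and $P$ carrying $\bdy A$, which is what the paper verifies.
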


\begin{proof}
If $S$ admits an accidental parabolic, then there exists a
non-peripheral closed curve $\gamma$ on $S$ which is freely homotopic
into $\bdy M$ through $M$.  The free homotopy defines a map of an
annulus $A_1$ into $M$, with one boundary component on $\gamma$ and
the other on $\bdy M$.  Put $A_1$ into general position with respect
to $S$.  Because $S$ may be non-orientable, we will work with the
boundary of a regular neighborhood of $S$, denoted $\widetilde{S}$.
We may move the component of $\partial A_1$ on $\widetilde{S}$ in a
bi-colar of $S$ to be disjoint from $\widetilde{S}$.  Now, any closed
curve of intersection of $A_1$ and $\widetilde{S}$ that bounds a disk
in $A_1$ can be pushed off $\widetilde{S}$ by the fact that
$\widetilde{S}$ is incompressible (because $S$ is essential,
Definition \ref{def:essential}).  Likewise, we can push off any arcs
of intersection of $A_1$ and $\widetilde{S}$ which have both endpoints
on $\bdy M$, because $\widetilde{S}$ is boundary incompressible.
Because we have moved the other boundary component of $A_1$ off of
$\widetilde{S}$, there can be no arcs of intersection of $A_1$ and
$\widetilde{S}$.  There may be closed curves of intersection that are
essential on $A_1$.

Apply a homotopy to minimize the number of closed curves of
intersection.  Then there is a sub-annulus $A_2 \subseteq A_1$ that is
outermost, i.e.\ has one boundary component on $\bdy M$, and one on
$\widetilde{S}$.  Note $A_2$ might equal $A_1$.  By construction, the
interior of $A_2$ is mapped to the interior of $M \cut \widetilde{S}$.
We may assume that the mapping of $A_2$ into $M \cut \widetilde{S}$ is
non-degenerate, i.e.\ cannot be homotoped into the boundary of $(M
\cut \widetilde{S})$, for otherwise the map of $A_1$ into $M$ can be
simplified by homotopy.
Now, the annulus theorem of Jaco \cite[Theorem VIII.13]{Jaco} implies
there exists an essential embedding of an annulus $A$ into $M \cut
\widetilde{S}$, with one end in $\widetilde{S}$ and the other end on
the parabolic locus $P$.

Now $M \cut \widetilde{S}$ is the disjoint union of an $I$--bundle
over $S$ and a manifold homeomorphic to $M \cut S$, with the
non-parabolic portions of $M \cut S$ homeomorphic to the non-parabolic
portions of $M \cut \widetilde{S}$. The $I$--bundle over $S$ cannot
contain any accidental parabolic annuli, for such an annulus would
realize a homotopy between a peripheral and a non-peripheral curve in
$S$. Thus $A$ must lie in the component of $M \cut \widetilde{S}$ which
is homeomorphic to $M \cut S$.
\end{proof}

 In \cite{fkp:gutsjp}, we constructed a
polyhedral decomposition of $M_\sigma$.  In the next section, we will
outline several of its pertinent features, while referring to \cite{fkp:gutsjp, fkp:survey} for details.  To handle the
case where $K$ is a knot, we mainly need the following result.

\begin{theorem}[Theorem 3.23 of \cite{fkp:gutsjp}]
  \label{thm:sigma-homo-poly}
Let $D(K)$ be a connected diagram with an adequate, homogeneous state
$\sigma$. There is a decomposition of $M_\sigma$ into 4--valent,
checkerboard colored ideal polyhedra. The ideal vertices lie on the
parabolic locus $P$, the white faces are glued to other polyhedra, and
the shaded faces lie in $\widetilde{S_\sigma}$, the non-parabolic part
of $\bdy M_\sigma$. 
\end{theorem}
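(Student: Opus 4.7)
The plan is to construct the polyhedral decomposition concretely by cutting $M_\sigma$ along a 2-complex $\Pi$ that generalizes the classical Menasco projection-plane cut, and then verify that the resulting pieces have the claimed structure. First, I would fix a geometric realization of $S_\sigma$ that uses the homogeneity of $\sigma$: each state disk bounded by a circle $c \in s_\sigma$ is positioned flat, just above or just below the projection sphere $S^2$, with the side chosen in a way compatible with the homogeneous labels of the complementary regions of $s_\sigma$ adjacent to $c$, and the bands at crossings are vertical half-twisted rectangles connecting the appropriate disks. Homogeneity is what makes such a coherent choice possible: because all bands inside a single complementary region of $s_\sigma$ point the same way, the local pictures fit together to give an embedded copy of $S_\sigma$.

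Next, I would define the cutting 2-complex $\Pi \subset M_\sigma$. Away from small neighborhoods of the crossings, $\Pi$ agrees with $S^2$ cut along $s_\sigma$. Near each crossing, insert a ``saddle square'' inside the crossing ball, whose four edges alternate between arcs on the two state circles meeting at the crossing and two cusp arcs running along $K$. The result is a properly embedded 2-complex in $M_\sigma$ whose 1-skeleton consists of state circles (which lie on $\widetilde{S_\sigma}$) together with ideal edges running into cusp neighborhoods of $K$.

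Then I would cut $M_\sigma$ along $\Pi$ and read off the polyhedral structure. Each connected component of the result should be an ideal 3-ball with the claimed boundary data: the white faces are the components of $\Pi$ (pieces of $S^2 \setminus s_\sigma$ together with the saddle squares at crossings), identified in pairs by the cutting so that regluing them reassembles $M_\sigma$; the shaded faces are the two sides of each state disk, lying in $\widetilde{S_\sigma}$; and the ideal vertices sit at the crossings, which lie on the parabolic locus $P$. The checkerboard coloring is built into the construction, since at every edge of a polyhedron a face of $\Pi$ meets a face of $\widetilde{S_\sigma}$, and four-valence at each ideal vertex follows from the local picture at a crossing, where the four edges of the saddle square meet.

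The main obstacle is verifying that each complementary region of $\Pi$ in $M_\sigma$ is actually a 3-ball, not a more complex handlebody. I would attack this by induction on the number of crossings of $D$: the base case is a diagram with no crossings, where $M_\sigma$ is visibly a union of balls, and the inductive step shows that adding a crossing (that is, gluing in a saddle square) splits a neighborhood in a controlled way that preserves the 3-ball property. Adequacy is essential here, as it prevents a segment at a crossing from returning to the same state circle --- a configuration that would allow a compressing disk for $\widetilde{S_\sigma}$ to appear inside a putative polyhedron and spoil its ball structure. An alternative approach, more in the spirit of \cite{fkp:gutsjp}, would describe each polyhedron by an explicit combinatorial model determined by the graph $H_\sigma$ and its homogeneous labels, then verify directly that the gluings of white faces across $\Pi$ reassemble $M_\sigma$; this would make the 4-valence, the checkerboard coloring, and the location of ideal vertices on $P$ all manifest by construction.
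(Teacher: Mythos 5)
The paper does not actually prove this statement: it is imported verbatim as \cite[Theorem 3.23]{fkp:gutsjp}, and Section \ref{sec:polyhedra} of the present paper only summarizes the features of the decomposition needed later. So your proposal has to be measured against the construction in that monograph. Your Menasco-style cut along a $2$--complex built from the projection sphere and saddle pieces is the right general spirit, but it diverges from the actual construction in ways that matter, and the one step you yourself flag as the main obstacle is exactly where all the work lies.

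Two concrete problems. First, you place the ideal vertices ``at the crossings.'' In the actual decomposition the ideal vertices correspond to strands of $D(K)$ between consecutive \emph{under}-crossings --- the ``zig-zags'' described in Section \ref{sec:polyhedra} --- not to crossings. This is not cosmetic: the identification of vertices with zig-zags is what the clockwise-map analysis (Lemma \ref{lemma:clockwise}\eqref{i:3}) and the later normal-square arguments (e.g.\ Lemma \ref{lemma:triangV}) rely on, and your $4$--valence argument, which reads the valence off the four edges of a saddle square, is therefore verifying the wrong local picture. Second, the assertion that each complementary region of $\Pi$ is a ball is the heart of the theorem, and your proposed induction on the number of crossings does not obviously close: changing the number of crossings changes the Kauffman state circles globally, so the state surface and cutting complex of a $(c-1)$--crossing diagram are not sub-objects of those of the $c$--crossing diagram in any way that sets up an inductive step, and ``gluing in a saddle square splits a neighborhood in a controlled way'' is an assertion, not an argument. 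In \cite{fkp:gutsjp} this is handled by the route you mention only as an alternative: an explicit combinatorial model in which the shaded faces of the upper polyhedron are neighborhoods of directed tree spines built from tentacles, the lower polyhedra are identified with checkerboard polyhedra of prime alternating sub-diagrams, and simple-connectivity of the faces (hence the ball property) is deduced from adequacy and homogeneity via lemmas such as the Escher Stairs Lemma. That machinery, together with the maximal collection of non-prime arcs --- which your cutting complex omits entirely, and which is what makes the lower polyhedra prime and free of normal bigons, properties used heavily in Sections \ref{sec:accidental} and \ref{sec:links} --- is the actual content of the proof; your sketch does not yet supply a substitute for it.
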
 

Normal surface theory  ensures that the intersections of the
annulus $A$ of Lemma \ref{lemma:annulus} with the polyhedral
decomposition of $M_\sigma$ can be taken to have a number of nice
properties.

\begin{define}
\label{def:normal}
We say a surface is in \emph{normal form} if it satisfies the
following conditions:
\begin{enumerate}[(i)]
\item\label{normal1} Each component of its intersection with the
  polyhedra is a disk.
\item\label{normal2} Each disk intersects a boundary edge of a
  polyhedron at most once.
\item\label{normal3} The boundary of such a disk cannot enter and
  leave an ideal vertex through the same face of the polyhedron.
\item\label{normal4} The surface intersects any face of the polyhedra
  in arcs.
\item\label{normal5} No such arc can have endpoints in the same ideal
  vertex of a polyhedron, nor in a vertex and an adjacent edge.
\end{enumerate}
\end{define}

\begin{lemma}\label{lemma:trapezoids}
Let $D(K)$ be a link diagram with an adequate, homogeneous state $\sigma$. Suppose the state surface $S_\sigma$ has an accidental parabolic. Then the embedded annulus $A$ of Lemma \ref{lemma:annulus} can be moved
by isotopy into normal form with respect to the polyhedral decomposition of
$S^3 \cut S_\sigma$.  The intersections of $A$ with white faces of the
polyhedra are all lines running from one boundary component of $A$ to
the other.
\end{lemma}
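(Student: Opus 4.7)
The strategy is a two-stage argument. First, I would use standard normal-surface theory to isotope $A$ into normal form with respect to the polyhedral decomposition of Theorem \ref{thm:sigma-homo-poly}. Second, I would use the annular topology of $A$, together with the fact that its two boundary components lie on different parts of $\bdy M_\sigma$, to show that the arcs of $A \cap W$ in any white face $W$ run between the two boundary circles.

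In the first stage, I would minimize the total intersection $\abs{A \cap \Sigma}$, where $\Sigma$ is the $2$--skeleton of the polyhedral decomposition. Since $A$ is essential by Lemma \ref{lemma:annulus} and $\widetilde{S_\sigma}$ is essential by hypothesis, standard Haken moves apply: innermost-disk surgeries remove closed curves of $A \cap W$ on white faces using the incompressibility of $A$; outermost-arc surgeries remove bigons cobounded by arcs of $A$ and ideal edges using the $\bdy$-incompressibility of $A$ or of $\widetilde{S_\sigma}$; and cusp-based isotopies remove arcs with endpoints at a single ideal vertex, or at a vertex and an adjacent edge. Each such move strictly decreases $\abs{A \cap \Sigma}$, so the procedure terminates with $A$ satisfying conditions (i)--(v) of Definition \ref{def:normal}.

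For the second stage, write $\bdy A = \bdy_1 A \sqcup \bdy_2 A$ with $\bdy_1 A \subset \widetilde{S_\sigma}$ and $\bdy_2 A \subset P$, as guaranteed by Lemma \ref{lemma:annulus}. In this decomposition every ideal edge is shared by shaded faces and thus lies on $\widetilde{S_\sigma}$, while every ideal vertex lies on $P$. Consequently, an endpoint of an arc $\alpha \subset A \cap W$ at an ideal edge is a point of $A \cap \widetilde{S_\sigma} = \bdy_1 A$, and an endpoint at an ideal vertex is a point of $A \cap P = \bdy_2 A$. The two bad cases to rule out are (a) both endpoints on $\bdy_1 A$, and (b) both endpoints on $\bdy_2 A$. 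Suppose (b) holds. Since $A$ is an annulus, it deformation retracts onto $\bdy_2 A$, so $\alpha$ is homotopic rel endpoints to one of the two sub-arcs $\beta$ of $\bdy_2 A$ joining its endpoints; this homotopy sweeps out an embedded bigon $D \subset A$ with $\bdy D = \alpha \cup \beta \subset W \cup P$. The disk $D$ lets me isotope $\alpha$ across $D$ and off $W$ through $P$, strictly reducing $\abs{A \cap W}$ and contradicting minimality. Case (a) is analogous, with $\beta \subset \bdy_1 A \subset \widetilde{S_\sigma}$ and the isotopy pushing $\alpha$ off $W$ across $\widetilde{S_\sigma}$.

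The principal obstacle will be that the bigon isotopy in the second stage is not obviously local: the disk $D \subset A$ could a priori pass through many polyhedra, so the reduction in $\abs{A \cap W}$ might be offset by new intersections on other faces, undoing the normal form of the first stage. I expect to control this by choosing $\alpha$ to be innermost on $A$ among arcs of $A \cap W$ violating the conclusion, so that the interior of $D$ is disjoint from $A \cap \Sigma$; then the isotopy is supported in a regular neighborhood of $D$, and the normal form is preserved. Iterating this reduction eliminates both bad cases and yields the desired conclusion.
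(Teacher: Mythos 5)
Your first stage is essentially the paper's: $M_\sigma$ is a handlebody, hence irreducible, and Haken's procedure \cite{haken:normal} puts $A$ into normal form. The gap is in your second stage. The disk $D$ you produce is a sub-disk of $A$ itself, so ``isotoping $\alpha$ across $D$ and off $W$'' is not a legitimate move: one cannot push a surface across a piece of itself, and to slide $\alpha$ off the face $W$ (through $P$ in your case (b), or across $\widetilde{S_\sigma}$ in your case (a)) you would need to know that $\alpha$ is \emph{boundary-parallel in the disk $W$}, i.e.\ that $\alpha$ cuts off a sub-disk of $W$ meeting $P$ (respectively, a single ideal edge) in one arc and containing nothing else. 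The mere existence of $D \subset A$ gives no such thing: if $\alpha$ is an essential arc of the pair $(W, W\cap P)$, then $D$ is a genuine boundary-compression disk for the face $W$ and no isotopy of $A$ removes the intersection, so your minimality argument has nothing to push against. Indeed, your argument would ``prove'' the conclusion for an arbitrary checkerboard-colored polyhedral decomposition of a handlebody, which is false: a decomposition admitting normal bigons can carry an essential annulus meeting a white face in an arc with both endpoints on shaded faces, and that arc cannot be isotoped away.

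What actually closes the two bad cases are two nontrivial combinatorial facts about \emph{this particular} decomposition, both established in \cite{fkp:gutsjp} and both invoked by the paper: first, every white face is boundary-incompressible in $M \cut S_\sigma$ (\cite[Lemma 3.20]{fkp:gutsjp}), which disposes of arcs with both endpoints on $N(K)$; second, the decomposition contains no normal bigons (\cite[Proposition 3.24]{fkp:gutsjp}), which is contradicted by the outermost disk cut off by an arc with both endpoints on $\widetilde{S_\sigma}$ --- exactly your disk $D$ in case (a), which, once innermost, lies in a single polyhedron and is a normal disk with one white and one shaded side. Your innermost-disk idea is the right way to localize $D$ to one polyhedron (though you should take $\alpha$ innermost among the arcs of $A\cap W'$ over \emph{all} white faces $W'$, not just $W$, to guarantee that the interior of $D$ misses the $2$--skeleton); but at that point the argument must appeal to these two structural results rather than to an isotopy, and without them the proof does not go through.
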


\begin{proof}
Note that $M_\sigma = S^3 \cut S_\sigma$ is topologically a handlebody, hence irreducible.
 By Haken \cite{haken:normal} we may isotope $A$ into normal form.
Consider the intersections of $A$ with white faces.  A component of 
intersection cannot be a simple closed curve, by item \eqref{normal4} of the definition of normal form. If a component of intersection is an arc with both endpoints on
$N(K)$, we can remove this intersection  by \cite[Lemma
  3.20]{fkp:gutsjp}: every white face of the polyhedral decomposition
is boundary incompressible in $M \cut S_\sigma$.  Similarly, an arc of intersection has both endpoints
on $S_\sigma$, then we may pass to an outermost such arc and obtain a \emph{normal bigon}, that is a normal disk with two sides. This contradicts \cite[Proposition 3.24]{fkp:gutsjp}: the polyhedral decomposition of $M\cut S_\sigma$ contains no normal bigons.
\end{proof}

We are now ready  to prove that an adequate, homogeneous state surface for a
\emph{knot}   admits no
accidental parabolics.  

%%%%%%%%%%%%%%%%%%%%%%%%%%%%%%%%%%%%%%%%%%%%%%%%%%%%%%%%%%%%%%%%%
\begin{theorem}\label{thm:noaccidental-knot}
Let $D(K)$ be a knot diagram with an adequate, homogeneous state $\sigma$.  Then the state surface
$S_\sigma$ cannot be accidental.
\end{theorem}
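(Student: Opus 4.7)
I would argue by contradiction. Suppose $S_\sigma$ admits an accidental parabolic. By Lemma~\ref{lemma:annulus}, there is an embedded essential annulus $A \subset M_\sigma$ with one boundary component $\alpha_S$ on $\widetilde{S_\sigma}$ and the other, $\alpha_P$, on the parabolic locus $P$, such that $\alpha_P$ is parallel to a component of $\bdy\widetilde{S_\sigma}$ on the single cusp torus $\bdy N(K)$.

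Next I would apply Lemma~\ref{lemma:trapezoids} to place $A$ in normal form with respect to the polyhedral decomposition of $M_\sigma$ from Theorem~\ref{thm:sigma-homo-poly}, so that every arc of intersection of $A$ with a white face runs from $\alpha_S$ to $\alpha_P$. As a result, every normal disk of $A$ in a polyhedron is a trapezoid whose boundary cycles through four sides in order: an arc on a shaded face (a piece of $\alpha_S$), an arc on a white face, an arc on a cusp cross-section at an ideal vertex (a piece of $\alpha_P$), and an arc on another white face.

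The heart of the argument is a combinatorial observation: within any single trapezoid, both white-face arcs terminate on the same cusp cross-section, namely the one containing the parabolic side of the trapezoid, and hence at a common ideal vertex of that polyhedron (since cusp cross-sections at distinct ideal vertices are disjoint inside a polyhedron). Because adjacent trapezoids in $A$ are glued along a common white-face arc, the parabolic endpoint of that arc is identified across the gluing, so consecutive trapezoids share the same ideal vertex in $M_\sigma$. Iterating around the connected annulus $A$, every trapezoid corresponds to one and the same ideal vertex $v$ in the cusp, so $\alpha_P$ is a simple closed curve contained entirely in the cusp cross-section at $v$. Since $K$ is a knot, this cross-section is a single polygonal disk embedded on the cusp torus $\bdy N(K)$; hence $\alpha_P$ bounds a disk on the torus and is contractible there.

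On the other hand, $\alpha_P$ is parallel to a component of $\bdy\widetilde{S_\sigma}$, which represents the non-trivial slope of $K$ on $\bdy N(K)$, so $\alpha_P$ must be essential on the cusp torus. This contradicts the contractibility just established, so no accidental parabolic can exist. The main technical obstacle is the combinatorial claim that all trapezoids share a single ideal vertex; justifying it requires a careful application of the polyhedral structure of \cite{fkp:gutsjp}, in particular the disjointness of cusp cross-sections at distinct ideal vertices within a polyhedron and the exact gluing pattern of white faces. For links, where $\bdy M$ consists of several tori and the parabolic locus has a more intricate combinatorial structure, this step is considerably more delicate, motivating the extended polyhedral analysis of Sections~\ref{sec:polyhedra}--\ref{sec:links}.
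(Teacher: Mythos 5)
Your reduction to an embedded normal annulus $A$ via Lemma~\ref{lemma:annulus} and Lemma~\ref{lemma:trapezoids}, and the description of its pieces as trapezoids with sides on a shaded face, two white faces, and an ideal vertex, matches the paper's setup. The gap is in what you call the heart of the argument. It is true that the two white-face arcs of a single trapezoid end at the same ideal vertex of that polyhedron, and that the gluing identifies the parabolic endpoints of consecutive trapezoids. But this does not place $\alpha_P$ inside a single embedded polygonal disk on the cusp torus. In this decomposition the truncated ideal vertices of the polyhedra tile the parabolic annulus in a cycle: each truncation quadrilateral has two opposite edges on $\partial\widetilde{S_\sigma}$ and two opposite edges lying in white faces, where it is glued to the truncation quadrilaterals of adjacent polyhedra. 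Consecutive trapezoids of $A$ therefore have their parabolic sides in \emph{adjacent} tiles of this cycle, not in a common disk, and $\alpha_P$ marches once around the entire parabolic annulus. Your conflation of ``identified at a point of a white face'' with ``contained in the same truncated vertex'' is where the argument breaks. This is forced in any case: $\alpha_P$ is parallel to a component of $\partial\widetilde{S_\sigma}$ by Lemma~\ref{lemma:annulus}, hence essential on $\partial N(K)$, so no correct argument can conclude that it is contractible there. A further warning sign is that your argument nowhere uses that $K$ is a knot in an essential way; if it were valid it would dispose of the link case as well, which the paper needs all of Sections~\ref{sec:polyhedra}--\ref{sec:links} to handle.

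The paper's proof runs in the opposite direction: precisely because $\alpha_P$ is parallel to $K$ and hence sweeps past every ideal vertex of the decomposition, every ideal vertex of a fixed white face $W$ must emit an arc of $A \cap W$, and by Lemma~\ref{lemma:trapezoids} each such arc terminates on an edge of $W$. An innermost such arc necessarily joins a vertex of $W$ to an adjacent edge, violating condition~(v) of Definition~\ref{def:normal}. That is where the knot hypothesis enters: for a link, $\alpha_P$ lies on one cusp torus only and need not visit every vertex of $W$. To repair your proof you would need to replace the contractibility claim with this (or some other) genuine combinatorial obstruction inside the white faces.
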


\begin{proof}
Suppose not: suppose $S_\sigma$ is accidental.  Then Lemma
\ref{lemma:annulus} implies there is an embedded annulus $A$ in
$M_\sigma$ with one boundary component on $\widetilde{S_\sigma}$ and the other on
the parabolic locus $N(K)$.  Consider the intersections of $A$ with a fixed white
face $W$.  Because the boundary component of $A$ on $N(K)$ runs
parallel to $S_\sigma$, the annulus $A$ must intersect each ideal
vertex of $W$.  Moreover, by Lemma \ref{lemma:trapezoids}, any
component of intersection $A \cap W$ runs from the component of $A$ on
$N(K)$ to the component on $\widetilde{S_\sigma}$.  Hence on $W$, this
intersection is an arc from an ideal vertex of $W$ to one of the sides
of $W$ (shaded faces are on $\widetilde{S_\sigma}$).

Because $A$ is normal, item (v) of Definition \ref{def:normal} implies
that such an arc cannot run from an ideal vertex to an adjacent edge.
But now we have a contradiction: there is no way to embed a collection
of arcs in $W$ such that each arc meets one ideal vertex and one side
of $W$ without having an arc that runs from an ideal vertex to an
adjacent edge.
\end{proof}

\section{Details of the ideal polyhedra}\label{sec:polyhedra}
The proof  of Theorem \ref{thm:noaccidental-knot} for links
requires knowing more information
about the the polyhedral decomposition of \cite{fkp:gutsjp}.  In this
section, we review some of the relevant features, referring to
\cite[Chapters 2--4]{fkp:gutsjp} for more details.
 
  A \emph{non-prime arc} is an arc with both endpoints
on the same state circle of $H_\sigma$, which separates the subgraph
of $H_\sigma$ on one side of the state circle into two graphs which
each contain segments.  Such a subgraph is called a \emph{non-prime
  half--disk}.  A collection of non-prime arcs is called
\emph{maximal} if, once we cut along all such arcs and all state
circles, the graph decomposes into subgraphs each of which contains a
segment, and no larger collection of non-prime arcs has the same
property.

Let $\{\alpha_1, \dots, \alpha_n\}$ denote a maximal collection of
non-prime arcs.  We define a \emph{polyhedral region} to be a
nontrivial region of the complement of the state circles and the
$\alpha_i$.  The manifold $M_\sigma = S^3 \cut S_\sigma$
decomposes into one upper polyhedron and several lower polyhedra.
Each lower polyhedron corresponds to precisely one of these polyhedral
regions. Furthermore, the state circles and segments that meet this
polyhedral region naturally define a subgraph of $H_\sigma$ and a
prime, alternating sub-diagram of $D(K)$. The $1$--skeleton of the
lower polyhedron is exactly the same as the $4$--valent projection
graph of the prime, alternating link diagram corresponding to this
subgraph of $H_\sigma$.

Our maximal collection of non-prime arcs ensures that the polyhedral
regions correspond to prime sub-diagrams of $D(K)$ and to lower
polyhedra without normal bigons.
Meanwhile, the vertices, edges, and faces of the upper polyhedron have
the following description.

\begin{enumerate}
\item Each white face corresponds to a (nontrivial,
  i.e.\ non-innermost disk) complementary region of $H_\sigma \cup
  (\cup_{i=1}^n \alpha_i)$.
\item Each shaded face lies on $\widetilde{S_\sigma}$, and is the
  neighborhood of a tree that we call a \emph{spine}. The spine is
  \emph{directed}, in that each edge has a natural orientation.
  Innermost disks are sources.  Arrows are attached corresponding to
  \emph{tentacles}, which run from a state circle adjacent to a
  segment (the \emph{head}) and then turn left (all--$A$ case) or
  right (all--$B$ case) and have their \emph{tail} along a state
  circle, as well as \emph{non-prime switches}, where four arrows meet
  at a non-prime arc.  See \cite[Figure 3.7]{fkp:gutsjp} for an
  illustration of these terms.

  When an arc is running through the directed spine in the direction
  of the arrows, we say it is running \emph{downstream}.
\item Each vertex of the upper polyhedron corresponds to a strand of
  $D(K)$ between consecutive under-crossings. In the graph $H_\sigma$,
  this strand follows a \emph{zig-zag},
  that is, an alternating sequence of portions of state circles and segments (possibly zero segments).
  See Figure \ref{fig:vertex}, right, for a zig-zag with one segment.
\item Each edge of the upper polyhedron starts at the head of a
  tentacle of a shaded face.  As a result, ideal edges can be given an
  orientation, which matches the orientation of the directed spine in
  that tentacle.
\end{enumerate}

White faces of the lower polyhedra are glued to white faces of the
upper polyhedron. We may transfer combinatorial information about the
upper polyhedron into the lower ones via a map called the clockwise map.

\begin{define}\label{def:clockwise}
Let $W$ be a white face of the upper polyhedron, with $n$ sides.  If
$W$ belongs to an all--$A$ polyhedral region, the \emph{clockwise map}
$\phi$ on $W$ is defined by composing the gluing map of the white face
with a $2\pi/n$ clockwise rotation.  See Figure \ref{fig:clockwise}.  If $W$ belongs to an all--$B$ polyhedral region,
the map $\phi$ is defined by composing the gluing map with a $2\pi/n$
counter-clockwise rotation.  We sometimes call it the
\emph{counter-clockwise map}.
\end{define}

\begin{figure}
\begin{center}
  \input{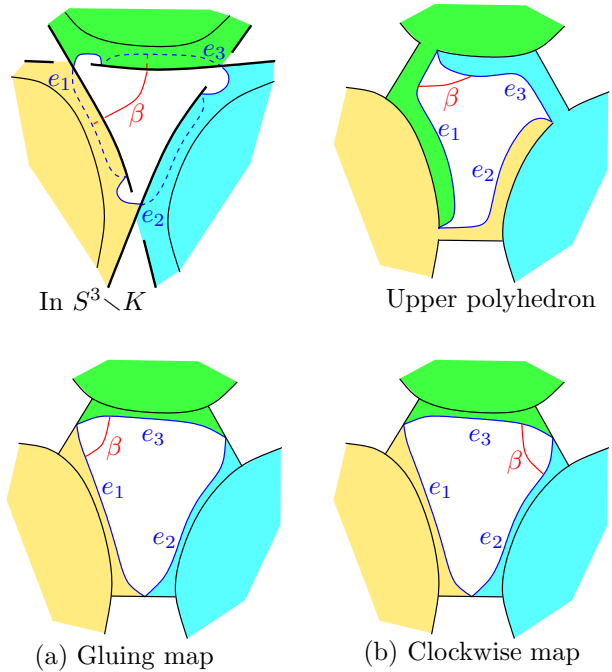}
  \end{center}
  \caption{An arc $\beta$ and its image under the gluing
    map and the clockwise map.}
  \label{fig:clockwise}
\end{figure}

As illustrated in Figure \ref{fig:clockwise}, the clockwise or
counter-clockwise map $\phi$ is orientation--preserving. This is
because the ``viewer'' is in the upper polyhedron: we see the boundary
of the upper polyhedron from the inside, and each lower polyhedron
from the outside. With this convention, the gluing map preserves
orientations, hence $\phi$ does also.

If the special case where $D(K)$ is prime and alternating, there is
exactly one lower polyhedron, and the $1$--skeleta of both the upper
and lower polyhedra coincide with the $4$--valent graph of the
diagram. In this case, both the clockwise and counter-clockwise maps
can be seen as the ``identity map'' on regions of the diagram
\cite{lackenby:volalt}. In the non-alternating setting, more details
about the clockwise map can be found in \cite[Sections 4.2 and
  4.5]{fkp:gutsjp}.

The following lemma describes the effect of the clockwise and
counterclockwise maps on \emph{normal squares}, that is, normal disks
with four sides.  Here we allow a portion of the quadrilateral that
runs over $N(K)$ (i.e. a neighborhood of an ideal vertex of the
polyhedral decomposition) to count as a side.

\begin{lemma}\label{lemma:clockwise}
Let $U$ be a polyhedral region of the projection plane, let $W_1,
\dots, W_k$ be the white faces in $U$, and let $P'$ be the lower
polyhedron associated to $U$.  Then the clockwise (counter-clockwise)
map $\phi\co W_1 \cup \dots \cup W_k \to P'$ has the following
properties:
\begin{enumerate}
\item\label{i:1} If $x$ and $y$ are points on the boundary of white
  faces in $U$ that belong to the same shaded face of the upper
  polyhedron, then $\phi(x)$ and $\phi(y)$ belong to the same shaded
  face of $P'$.
\item\label{i:2} Let $S$ be a normal square in the upper polyhedron
  with two sides on shaded faces (that is, on $\widetilde{S_\sigma}$)
  and two sides on white faces $V$ and $W$, with $V$ and $W$ both
  belonging to polyhedral region $U$.  Let $\beta_v = S\cap V$ and
  $\beta_w = S\cap W$.  Then the arcs $\phi(\beta_v)$ and
  $\phi(\beta_w)$ can be joined along shaded faces to give a normal
  square $S' \subset P'$, defined uniquely up to normal isotopy.
  Write $S' = \phi(S)$.
\item\label{i:3} Let $S$ be a square in the upper polyhedron with one
  side on a shaded face, two sides on white faces $V$ and $W$, and the
  fourth side on $N(K)$, meeting the upper polyhedron in a single
  ideal vertex between $V$ and $W$.  Suppose further that $V$ and $W$
  both belong to polyhedral region $U$.  Then the arcs $\beta_v= S\cap
  V$ and $\beta_w = S\cap W$ meet at a single ideal vertex in the
  lower polyhedron, and their other endpoints can be joined along a
  shaded face to give a normal square $S'\subset P'$, defined uniquely
  up to normal isotopy.  Write $S' = \phi(S)$.
\item\label{i:4} If $S_1$ and $S_2$ are disjoint normal squares in the
  upper polyhedron, all of whose white faces belong to $U$, then
  $\phi(S_1)$ is disjoint from $\phi(S_2)$.
\end{enumerate}
\end{lemma}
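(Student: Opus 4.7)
The plan is to derive (1) from the explicit description of the clockwise map in \cite[Sections 4.2 and 4.5]{fkp:gutsjp}, and then bootstrap (2)--(4) from (1). For part (1), the gluing map identifies a white face $W$ of the upper polyhedron with its counterpart in the lower polyhedron $P'$ by a $2\pi/n$ cyclic shift on the $n$ sides of $W$; in the prime alternating case this is designed to make $\phi$ the identity on regions of the diagram \cite{lackenby:volalt}. In the general homogeneously adequate setting, the rotation composed with the gluing again realigns each side of $W \subset P'$ with the side of $W$ in the upper polyhedron that borders the same shaded face, because the tentacles, heads and tails, and non-prime switches encode the identification of shaded faces along $\bdy W$ in exactly the way compatible with this cyclic shift. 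Unpacking the construction of \cite{fkp:gutsjp} thus yields (1) directly.

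For parts (2) and (3), let $S$ be a normal square with white-face arcs $\beta_v \subset V$ and $\beta_w \subset W$. In case (2), the two shaded sides of $S$ certify that the two pairs of matched endpoints of $\beta_v \cup \beta_w$ each lie on a common shaded face $F_i$ of the upper polyhedron. By (1), the images under $\phi$ are paired on the same two shaded faces inside $P'$, so I can join $\phi(\beta_v)$ and $\phi(\beta_w)$ by embedded arcs on $F_1$ and $F_2$ to produce $S' \subset P'$. In case (3), the side of $S$ on $N(K)$ meets the upper polyhedron in a single ideal vertex between $V$ and $W$, and the gluing sends this ideal vertex to the unique ideal vertex of $P'$ shared by $\phi(V)$ and $\phi(W)$, supplying one corner of $S'$; the remaining pair of endpoints is joined on a shaded face exactly as in case (2). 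Normality of $S'$ is then checked against Definition \ref{def:normal}: items (i), (ii), (iv) transfer because $\phi|_V$ and $\phi|_W$ are homeomorphisms and the joining arcs on shaded faces are chosen embedded; items (iii) and (v) describe local behavior of $\bdy S'$ near ideal vertices of $P'$, which by (1) is the combinatorial image of the corresponding behavior of $\bdy S$ near ideal vertices of the upper polyhedron, so it too is inherited. Uniqueness up to normal isotopy follows because embedded arcs on a disk face with fixed endpoints are normally isotopic.

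For (4), the restriction of $\phi$ to each white face is a homeomorphism, so $\phi(S_1) \cap V$ and $\phi(S_2) \cap V$ are disjoint whenever $S_1 \cap V$ and $S_2 \cap V$ are, and likewise on every other white face in $U$; the joining arcs on each shaded face can be chosen disjoint by a further normal isotopy, using the fact that each shaded face is a disk. The main obstacle I anticipate is a rigorous verification of (1) in the non-alternating setting: one must simultaneously track tentacles, non-prime switches, and the zig-zag structure of upper-polyhedron vertices along $\bdy W$, and confirm that the $2\pi/n$ rotation realigns the shaded-face labelling at every side of every white face in $U$. Once (1) is secured, parts (2)--(4) reduce to routine normal-arc bookkeeping on the disks that make up the white and shaded faces.
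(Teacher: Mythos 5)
There are genuine gaps at precisely the points where this lemma adds content beyond what is already known. Items \eqref{i:1} and \eqref{i:2} are not things to re-derive by ``unpacking the construction'': they are \cite[Lemma 4.8]{fkp:gutsjp} verbatim (with ``clockwise'' replaced by ``counter-clockwise'' in the all--$B$ case), and the difficulty you flag as ``the main obstacle'' --- rigorously verifying \eqref{i:1} in the non-alternating setting --- is resolved by citing that lemma, not by the heuristic realignment argument you sketch. The real content here is \eqref{i:3} and \eqref{i:4}, and in both places your argument asserts the key point rather than proving it. For \eqref{i:3}, you claim that ``the gluing sends this ideal vertex to the unique ideal vertex of $P'$ shared by $\phi(V)$ and $\phi(W)$.'' But $\phi$ is not the gluing map --- it is the gluing map composed with a $2\pi/n$ rotation on each face separately --- so the two endpoints of $\beta_v$ and $\beta_w$ that meet at the vertex are moved independently, and nothing you say rules out their landing on distinct ideal vertices of $P'$ (nor is it established that $\phi(V)$ and $\phi(W)$ share a \emph{unique} ideal vertex). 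The missing argument is combinatorial: since $V$ and $W$ lie in the same polyhedral region, no state circle separates them, so the ideal vertex between them is a zig-zag containing exactly one segment (one over-crossing); the clockwise map sends the vertex-endpoint of an arc in a white face to the center of the next segment of $H_\sigma$ in the clockwise direction, and for both $\beta_v$ and $\beta_w$ that next segment is this single shared segment. Hence the two image arcs end on the same ideal vertex of $P'$.

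For \eqref{i:4}, disjointness of the white-face sides does follow from $\phi$ being a homeomorphism on white faces, but disjointness of the shaded-face sides does not follow from ``each shaded face is a disk.'' Two pairs of points on the boundary of a disk can be joined by disjoint embedded arcs only if the pairs are unlinked on the boundary circle, and that linking condition must be verified case by case: when both images run through the interior of a shaded face one invokes the argument of \cite[Lemma 4.8]{fkp:gutsjp}; the delicate case is when $\phi(S_1)$ and $\phi(S_2)$ both pass through a neighborhood of the \emph{same} ideal vertex. There one must use that, since $S_1$ and $S_2$ are disjoint, the arcs of $S_1$ lie on a consistent side of the arcs of $S_2$ in both adjacent white faces, and that this sidedness is preserved by applying the clockwise rotation to both faces, so the images can be connected at the vertex without crossing. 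Your appeal to ``a further normal isotopy'' does not engage with this obstruction.
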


\begin{proof}
Items \eqref{i:1} and \eqref{i:2} are proved in \cite[Lemma
  4.8]{fkp:gutsjp} in the case where $U$ is an all--$A$ polyhedral
region.  The proof of the all--$B$ case is identical, with
``clockwise'' replaced by ``counter-clockwise.''  We do need to prove
items \eqref{i:3} and \eqref{i:4}.

For \eqref{i:3}, let $S$ be a normal square in the upper polyhedron as
described: sides $\beta_w$ and $\beta_v$ are arcs in white faces $V$
and $W$ lying in $U$, meeting at a single ideal vertex in the upper
polyhedron.  The proof of \eqref{i:2} implies that the endpoints of
$\phi(\beta_w)$ and $\phi(\beta_v)$ on shaded faces can be connected
by an arc in a single shaded face.  Thus we focus on the endpoints
which lie on an ideal vertex.

Because the clockwise (or counter-clockwise) map takes vertices of
white faces to vertices, each of the arcs $\phi(\beta_w)$ and $\phi(\beta_v)$
still has one end on an ideal vertex in $P'$.  We need to verify
that they have this end on the same ideal vertex of $P'$.

Assume, without loss of generality, that $U$ is an all--$A$ polyhedral
region, and the map $\phi$ is clockwise.  (The proof for the
counter-clockwise map will be identical.)

Recall that an ideal vertex in the upper polyhedron corresponds to a
zig-zag in the graph $H_\sigma$.  Because $V$ and $W$ belong to the
same polyhedral region, they are not separated by any state
circles. As a result, the vertex between them must be a zig-zag with a
single segment. This single segment corresponds to a single
over-crossing of the diagram and a single segment of the graph
$H_\sigma$, as in Figure \ref{fig:vertex}.  But now, the clockwise map
rotates the vertices of each white face clockwise, to lie in the
center of the next segment of $H_\sigma$ in the clockwise direction.
Now, the  endpoints of $\beta_v$ and $\beta_w$ are rotated to the
center of the same segment, namely the segment corresponding to the
single over--crossing of the ideal vertex.

\begin{figure}
  \input{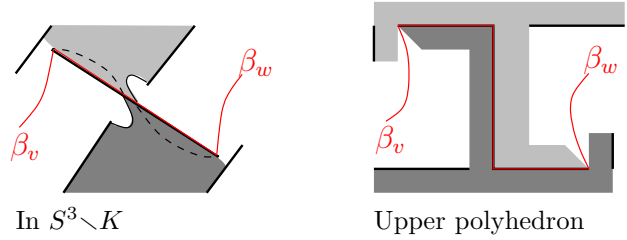}
  \caption{Two arcs in white faces in the same all--$A$ polyhedral region,
    meeting the same ideal vertex, must be as shown. In an all--$B$ region, 
    the picture is mirror reversed.}
  \label{fig:vertex}
\end{figure}

Finally, for item \eqref{i:4}, as $\phi$ is a homeomorphism on white
faces, sides of $\phi(S_1)$ and $\phi(S_2)$ on white faces are
disjoint.  If both $\phi(S_1)$ and $\phi(S_2)$ pass through the
interior of a shaded face $F$, then the argument of \cite[Lemma
  4.8]{fkp:gutsjp} shows they are disjoint.  If $\phi(S_1)$ passes
through the interior of a shaded face $F$ and $\phi(S_2)$ passes
through a vertex, then they will be disjoint in $F$.  Finally, if
$\phi(S_1)$ and $\phi(S_2)$ both pass through ideal vertices of $F$,
if they pass through distinct vertices then their images will be
disjoint.  If they pass through (a neighborhood of) the same vertex in
the upper polyhedron, since the squares are disjoint, in the adjacent
white faces the arcs of $S_1$ must lie on the same side of the arc of
$S_2$.  This will be preserved by the clockwise map acting on both
faces, and so the images can be connected at the vertex in a manner
that keeps them both disjoint.
\end{proof}

\section{The case of links}\label{sec:links}

The goal of this section is to prove Theorem \ref{thm:noaccidental},
which generalizes Theorem \ref{thm:noaccidental-knot} to links with
multiple components. We note that, unlike Theorem \ref{thm:noaccidental-knot}, this result needs the hypothesis of prime diagrams.

\begin{theorem}\label{thm:noaccidental}
Let $D(K)$ be a prime, $\sigma$--adequate, $\sigma$--homogeneous link
diagram.  Then the state surface $S_\sigma$ has no accidental
parabolics.
\end{theorem}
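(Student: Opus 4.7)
As in the proof of Theorem \ref{thm:noaccidental-knot}, I would argue by contradiction. Assuming $S_\sigma$ admits an accidental parabolic, Lemmas \ref{lemma:annulus} and \ref{lemma:trapezoids} yield an embedded essential annulus $A \subset M_\sigma$ in normal form with respect to the polyhedral decomposition of Theorem \ref{thm:sigma-homo-poly}. One boundary component $\alpha$ of $A$ lies on $\widetilde{S_\sigma}$, and the other, $\alpha'$, lies on a parabolic annulus inside $N(K_i)$ for some single link component $K_i \subset K$. Every arc of $A$ in a white face $W$ runs from $\alpha$ (on a shaded edge of $W$) to $\alpha'$ (at an ideal vertex of $W$).

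The main new obstacle compared to the knot case is that $\alpha'$ only visits those ideal vertices of the polyhedral decomposition that correspond to strands of the single component $K_i$. When $K$ has multiple components, a white face $W$ may contain ideal vertices lying on other components of $K$, at which no arcs of $A$ end. The direct pigeonhole step of Theorem \ref{thm:noaccidental-knot}---that every ideal vertex of $W$ carries an arc, forcing some such arc to run from a vertex to an adjacent edge and violating condition (v) of Definition \ref{def:normal}---therefore no longer applies.

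My plan is to transfer the normal disks of $A$ from the upper polyhedron into the lower polyhedra via the clockwise (or counter-clockwise) map $\phi$ of Lemma \ref{lemma:clockwise}, where the combinatorial structure is more rigid. A standard normalization argument should reduce to the situation where each disk of $A$ in the upper polyhedron is a normal square matching case \eqref{i:3} of Lemma \ref{lemma:clockwise}: two white sides in faces $V, W$, one shaded side, and one ideal-vertex corner between $V$ and $W$. Whenever $V$ and $W$ lie in a common polyhedral region $U$, part \eqref{i:3} of that lemma carries this square to a normal square in the associated lower polyhedron $P'$, and part \eqref{i:4} preserves disjointness, so the transferred squares glue along shared white faces into an embedded normal annulus $A^* \subset P'$.

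The principal technical obstacle will be controlling squares of $A$ whose two white sides $V, W$ lie in \emph{distinct} polyhedral regions, separated by a state circle or non-prime arc, since such squares cannot be transferred directly by $\phi$. I would address these using the primality of $D(K)$ together with the maximal choice of non-prime arcs, which guarantees that lower polyhedra contain no normal bigons \cite[Proposition 3.24]{fkp:gutsjp}: the existence of such a square should either force a boundary-compressing disk for $A$ (contradicting essentiality) or produce a forbidden bigon after applying $\phi$. Once $A^*$ is realized inside a lower polyhedron $P'$, whose $1$-skeleton is the $4$-valent graph of a prime alternating sub-diagram with each ideal vertex corresponding to a single crossing, the remaining step is to adapt the knot-case combinatorial argument to $A^*$---ideally finding a white face of $P'$ in which the arc pattern forces an arc from an ideal vertex to an adjacent edge, contradicting condition (v) of Definition \ref{def:normal}.
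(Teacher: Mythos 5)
Your setup (contradiction, Lemmas \ref{lemma:annulus} and \ref{lemma:trapezoids}, decomposition of $A$ into normal squares, transfer to the lower polyhedra via the clockwise map) matches the paper's strategy, and you have correctly diagnosed why the knot-case pigeonhole fails. But the central mechanism of your plan is wrong. You claim that the $\phi$-images of the upper squares ``glue along shared white faces into an embedded normal annulus $A^* \subset P'$.'' They do not: by Definition \ref{def:clockwise}, $\phi$ is the gluing map \emph{composed with} a $2\pi/n$ rotation, so if $A_1$ is glued to $A_2$ across a white face $V$, the arc $\phi(A_1)\cap V$ is offset from $A_2 \cap V$ by one rotation and in general \emph{intersects} it (this is exactly \cite[Lemma 4.10]{fkp:gutsjp}, invoked in Lemma \ref{lemma:same-white-faces}). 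The paper's argument is driven precisely by this forced overlap: the squares $T_1, T_2, T_3, \dots$ of Definition \ref{def:ti} form an overlapping chain in one lower polyhedron, not an embedded annulus, and analyzing how consecutive $T_i$ must intersect (Lemmas \ref{lemma:orientations} and \ref{lemma:encircle-bigons}) is what forces each $T_i$ to encircle a bigon shaded face. There is no embedded $A^*$ to run a normal-form argument on.

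Your endgame is also not viable even granting $A^*$: in a lower polyhedron the arcs of a normal square still run from an ideal vertex to a shaded side, and nothing forces every ideal vertex of a white face to carry such an arc, so condition (v) of Definition \ref{def:normal} cannot be violated by the same pigeonhole. The paper's actual conclusion is of a different character: the bigon-encircling structure, together with primeness, forces $V$ and $W$ to be the \emph{only} white faces, so $D(K)$ is the standard $(2,n)$ torus link diagram and $S_\sigma$ is an annulus (Lemma \ref{lemma:2q-link}); an annulus has no non-peripheral simple closed curves, so $A$ cannot realize an accidental parabolic. You are also missing the preliminary reductions (Lemma \ref{lemma:more2}, Proposition \ref{prop:more4}, and the pull-off Convention \ref{conv:pull-off} guaranteeing a white face with at least four sides), without which the clockwise-image intersection analysis does not get off the ground; and your treatment of squares whose white faces lie in distinct polyhedral regions (``boundary-compressing disk or forbidden bigon'') is a placeholder where the paper needs the Escher Stairs, Utility, Downstream, and Parallel Stairs lemmas applied to the zig-zag structure of ideal vertices (Lemma \ref{lemma:triangV}).
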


Suppose, to the contrary, that the state surface $S_\sigma$ is
accidental. Then Lemma \ref{lemma:annulus} implies there is an
embedded annulus $A \subset M_\sigma$ with one boundary component on
$\widetilde{S_\sigma}$ and the other on the parabolic locus
$N(K)$. After placing $A$ in normal form (as in Lemma
\ref{lemma:trapezoids}), we obtain a number of normal squares in
individual polyhedra.  Following the annulus, these squares $A_1,
\dots, A_n$ alternate lying in the upper polyhedron, then a lower
polyhedron, then the upper polyhedron again, and so on.  Each $A_i$
has two sides on white faces, one on a shaded face, and one on $N(K)$.
Finally, each $A_i$ is glued to $A_{i+1}$ along a white face of the
decomposition. Throughout this section, we adopt the convention that
odd-numbered squares are in the upper polyhedron.

The proof of Theorem \ref{thm:noaccidental} is broken up into a number
of lemmas, which analyze the intersection pattern of these squares and
their clockwise images. In \S \ref{sec:reductions}, we perform
the first reductions in the proof and show Proposition
\ref{prop:more4}: the annulus $A$ must be composed of at least $4$
squares, and some white face met by $A$ has at least $4$ sides. Then,
in \S \ref{subsec:annuli-squares}, we use the conclusion of
Proposition \ref{prop:more4} to restrict the possibilities for $D(K)$
further and further, until we show in \S \ref{subsec:completing} that $S_\sigma$ has no accidental
parabolics.

\subsection{First reductions in the proof}\label{sec:reductions}
We begin with  the following lemma.

 \begin{lemma}\label{lemma:more2}
The annulus $A$ must contain at least $4$ normal squares. 
\end{lemma}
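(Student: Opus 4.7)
The plan is to first observe that $n$ must be even, and then rule out $n = 2$. For the parity: the squares $A_1, \dots, A_n$ alternate between the upper polyhedron and lower polyhedra, and since the annulus $A$ closes into a cyclic sequence, it must return to its starting polyhedron type after $n$ steps, forcing $n$ to be even. Since $n \geq 1$ (the annulus meets the polyhedra nontrivially) and $n$ is even, the only remaining case to exclude is $n = 2$.

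Suppose for contradiction that $n = 2$, so $A = A_1 \cup A_2$ with $A_1$ in the upper polyhedron, $A_2$ in a lower polyhedron, glued along two shared white faces $V$ and $W$. In each square, the two white sides (lying on $V$ and $W$) together with the $N(K)$-side emanate from a common ideal vertex: call it $v_1$ for $A_1$ and $v_2$ for $A_2$. Because $V$ and $W$ are shared between the two polyhedra, the gluing identifies the $N(K)$-corner of $V$ from the upper side with the $N(K)$-corner from the lower side, and similarly for $W$, forcing $v_1 = v_2 =: v$ as a single point on $\partial N(K)$.

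Thus the $N(K)$-boundary $c_1 = \alpha_1 \cup \alpha_2$ of $A$ consists of two short arcs lying entirely within the union of the two cusp cross-sections at $v$, one from each polyhedron. The next step is to argue, using the primeness of $D(K)$, that this union embeds in the cusp torus $\partial N(K)$ as a region whose core curve is null-homotopic, so that $c_1$ is null-homotopic on $\partial N(K)$. Granted this, $c_1$ bounds a disk $D_1 \subset \partial N(K)$; attaching $D_1$ to $A$ along $c_1$ yields a disk in $M_\sigma$ whose boundary is $c_2 \subset \widetilde{S_\sigma}$. This means $c_2$ is null-homotopic in $M_\sigma$, and by incompressibility of $\widetilde{S_\sigma}$, it is null-homotopic on $S_\sigma$ itself, contradicting the non-triviality of the accidental parabolic curve from Lemma \ref{lemma:annulus}.

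The main obstacle will be rigorously verifying that the union of the two cusp cross-sections at $v$ does not wrap longitudinally around $\partial N(K)$. I expect this to follow from the primeness and adequacy hypotheses on $D(K)$, which together should ensure that each link component contributes sufficiently many distinct ideal vertices to its cusp torus so that a neighborhood of the single shared vertex $v$ sits in a disk of $\partial N(K)$ rather than in an essential sub-annulus. If this cleaner topological route fails in some degenerate subcase, the backup plan is to examine the normal-form constraints of Definition \ref{def:normal} at the ideal vertex $v$, using item (v) to rule out the specific arc configurations that $\alpha_1$ and $\alpha_2$ would have to realize in the quadrilateral cross-sections, along the lines of the combinatorial contradiction in the proof of Theorem \ref{thm:noaccidental-knot}.
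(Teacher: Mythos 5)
Your parity argument and the reduction to excluding $n=2$ match the paper. But the core of your $n=2$ argument has a genuine gap, and I think the strategy itself is unworkable. Your plan is to show that the boundary component $c_1 = \bdy A \cap P$ is null-homotopic on the cusp torus $\bdy N(K)$ and then compress. However, Lemma \ref{lemma:annulus} already establishes that this component is parallel to a component of $\bdy\widetilde{S_\sigma}$, i.e.\ it is a core curve of a parabolic annulus and hence \emph{essential} on the torus (it represents the parabolic element of the accidental parabolic, which is nontrivial in $\pi_1(M)$). So the statement you hope to verify is false in general, and proving it in the $n=2$ case would amount to deriving the contradiction you want --- but that is precisely the content you have deferred to ``primeness and adequacy should ensure each link component contributes sufficiently many ideal vertices.'' That assertion is the entire difficulty, not a routine check: it would require showing that no component of the parabolic locus is tiled by only two ideal vertices arranged with $V$ and $W$ at their shared corners, and nothing in your sketch engages with the combinatorics of $H_\sigma$ needed to rule this out. (Note also that the paper's own endgame, Lemma \ref{lemma:2q-link}, shows that $(2,n)$ torus link configurations do survive all the local combinatorial arguments and are only excluded at the very end because $S_\sigma$ is an annulus --- a warning that ``the diagram is too simple for this to happen'' is not cheaply available here.) A smaller imprecision: the ideal vertices $v_1$ and $v_2$ met by $A_1$ and $A_2$ are distinct cells (one of the upper, one of a lower polyhedron); what the gluing forces is only that the arcs $\alpha_1,\alpha_2$ share endpoints at the corners of $V$ and $W$, so $c_1$ runs through two adjacent vertex regions rather than sitting at ``a single point $v$.''

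The paper's actual argument is quite different and entirely combinatorial inside one lower polyhedron: since $A_1$ is glued to $A_2$ along both of its white faces, $V$ and $W$ lie in a single polyhedral region, so Lemma \ref{lemma:clockwise}\eqref{i:3} transfers $A_1$ to a normal square $A_1'=\phi(A_1)$ in the corresponding lower polyhedron. Because $\phi$ differs from the gluing map by one click of rotation, the arcs of $A_2$ and $A_1'$ in $V$ and in $W$ are forced into a specific position, and assembling the picture shows that the two endpoints of $A_2$'s white-face arcs that must be joined by an embedded arc in a single shaded face are separated by a circle of polyhedral edges --- an immediate contradiction. Your ``backup plan'' (normal-form condition \eqref{normal5} at the vertex, as in Theorem \ref{thm:noaccidental-knot}) also does not obviously apply, since that argument uses that $A$ meets \emph{every} ideal vertex of a white face, which is only available for knots. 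You would need to rebuild the argument along the lines of the clockwise-map analysis.
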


\begin{proof}
Since the squares $A_i$ alternate between the upper and lower
polyhedra, the number of these squares must be even. Thus, suppose $A$
consists of only two squares: $A_1$ in the upper polyhedron and $A_2$
in a lower polyhedron.  Since $A_1$ is glued to $A_2$ along both of
its white faces, these white faces $V$ and $W$ must lie in the same
polyhedral region $U$.

By Lemma \ref{lemma:clockwise} \eqref{i:3}, we may map $A_1$ into the
lower polyhedron by a map $\phi$.  The normal square $A'_1 = \phi(A_1)$ runs through
one ideal vertex, white faces $V$ and $W$, and a single shaded
face. Without loss of generality, the map $\phi$ rotates clockwise.

Recall that $A_1$ is glued to $A_2$ across $V$, and that the clockwise
map $\phi$ differs from the gluing map by a $2\pi/n$ rotation. Thus in
$V$, the arc of $A_2$ differs from that of $A'_1$ by a single
clockwise rotation.  Similarly in $W$.  Thus the arcs of $A'_1$ and of
$A_2$ in $V$ and $W$ must be as in Figure \ref{fig:2squares}, left.
The dashed lines in that figure indicate the clockwise motions of
$A_2$.  These must be the lines on the white faces $V$ and $W$
corresponding to $A'_1$.  Note that the points where the dashed lines
meet a vertex, labeled $x$ and $y$, must agree in the polyhedron.
Putting these two points together, the diagram must be as in
Figure \ref{fig:2squares}, right.

\begin{figure}
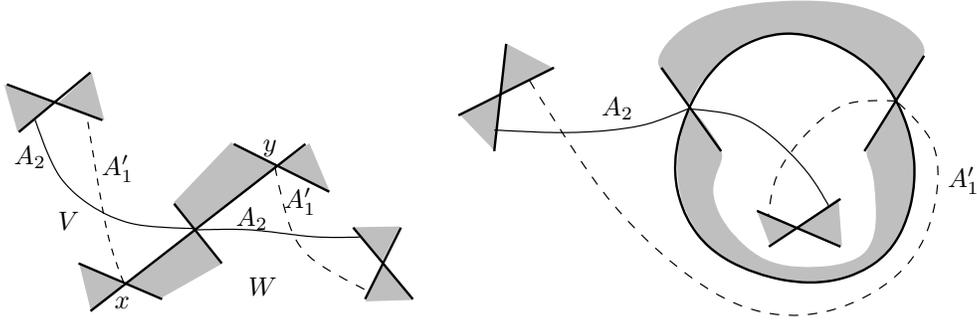

\begin{center}
  \input{figures/2squares.pstex_t}
  \hspace{.1in}
  \input{figures/2squares-1v.pstex_t}
\end{center}
  \caption{A picture of a lower polyhedron, in the case where $A$ is
    cut into only two squares $A_1$ and $A_2$. %% The solid line shows
    %% the boundary of $A_2$ in white faces $V$ and $W$. The dashed shows
    %% the boundary of $A'_1 = \phi(A_1)$.
  }
  \label{fig:2squares}
\end{figure}

But note in particular that there is a circle coming from the edges of
the polyhedra which separates the two endpoints of the solid line representing $A_2$. (It also separates the two endpoints of the dashed line representing $A'_1$.)  Since these endpoints must be connected by an
embedded arc of $A_2$ in a shaded face, we have a contradiction.
\end{proof}

\begin{lemma}\label{lemma:triangle-diffpoly}
Let $A_1 \subset A$ be a normal square in the upper polyhedron. If
both white faces met by $A_1$ are triangles, these triangles are in
different polyhedral regions.
\end{lemma}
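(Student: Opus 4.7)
The plan is to argue by contradiction. Suppose both $V$ and $W$ are triangles lying in the same polyhedral region $U$; without loss of generality take $U$ to be all--$A$ (the all--$B$ case is symmetric via the counter-clockwise map). Let $v_0$ be the shared ideal vertex at which $A_1$ meets $N(K)$, and let $P'$ be the lower polyhedron associated to $U$. Lemma \ref{lemma:clockwise}(\ref{i:3}) then supplies a normal square $\phi(A_1)\subset P'$, and the strategy is to show that this image forces an impossible shaded-face identification at $v_0$.

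First I would pin down the combinatorial structure of the arcs. Since $V$ is a triangle and $A_1\cap V$ is a normal arc with one endpoint at $v_0$, normal-form condition (v) of Definition \ref{def:normal} forces its other endpoint to lie on the unique side of $V$ opposite $v_0$; the same holds for $A_1\cap W$. Applying the $2\pi/3$ clockwise rotation in each triangle, each of the rotated arcs $\phi(A_1)\cap V$ and $\phi(A_1)\cap W$ now runs from the common new ideal vertex $\tilde v$ (guaranteed by Lemma \ref{lemma:clockwise}(\ref{i:3})) to a side of its triangle that is adjacent to $v_0$ in $P'$. At $v_0$, the four faces of $P'$ are arranged in cyclic checkerboard order $V, S_1, W, S_2$, where $S_1,S_2$ are the two shaded faces at diagonally opposite corners of $v_0$. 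A careful orientation check (using that both rotations are clockwise as viewed from inside the upper polyhedron) shows that the two terminal sides bound $S_1$ and $S_2$ respectively. But since the shaded side of $A_1$ lives in a single shaded face of the upper polyhedron, Lemma \ref{lemma:clockwise}(\ref{i:1}) forces the two terminal endpoints of $\phi(A_1)$ to lie on a common shaded face of $P'$. Hence $S_1=S_2$: a single shaded face of $P'$ has two corners at the ideal vertex $v_0$.

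Finally I would extract a contradiction from this self-touching. In the self-touching shaded face choose an embedded arc $\gamma$ joining its two corners at $v_0$, with endpoints on two edges of $P'$ incident to $v_0$; closing $\gamma$ with a short arc on $N(K)$ running around the cusp at $v_0$ yields a normal disk with only two sides, i.e., a normal bigon in $P'$, contradicting the no-bigon conclusion of Theorem \ref{thm:sigma-homo-poly} (\cite[Proposition 3.24]{fkp:gutsjp}, also used in Lemma \ref{lemma:trapezoids}). The main obstacle will be the orientation bookkeeping in the middle paragraph: one must verify that the terminal sides of the two rotated arcs bound the two distinct diagonal shaded faces, rather than both bounding the same one, so that the conclusion $S_1=S_2$ is genuinely forced. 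Should that step be delicate, a cleaner alternative contradiction is available via $\sigma$-adequacy: since the segment at the crossing $v_0$ has endpoints on distinct state circles, the two shaded corners at $v_0$ correspond to distinct disks of the state surface, directly contradicting $S_1=S_2$.
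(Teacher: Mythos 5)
Your approach diverges from the paper's, and unfortunately the divergence is where the argument breaks. The paper's proof does not try to derive a contradiction from $\phi(A_1)$ alone: it brings in the two neighboring squares $A_2$ and $A_n$ of the annulus, compares $\phi(A_1)$ with $A_2$ inside the lower polyhedron (their arcs in a shared white triangle differ by a single clockwise rotation), deduces that the two white triangles must in fact be the same face, and only then gets a contradiction from the fact that $A_2$ and $A_n$ are distinct (Lemma \ref{lemma:more2}) and disjoint (embeddedness of $A$). Your proposal never uses $A_2$, $A_n$, or embeddedness, and this omission is fatal, because a single normal square with the structure of $\phi(A_1)$ can perfectly well exist in a lower polyhedron.

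Concretely, the gap is in your middle paragraph. You correctly note that normality forces $\phi(\beta_v)$ to run from the common image vertex $\tilde v$ to the edge of the triangle $V$ \emph{opposite} $\tilde v$, and likewise for $\phi(\beta_w)$ in $W$. But the shaded face adjacent to the edge of a triangle opposite $\tilde v$ is, in general, a \emph{third} shaded face of $P'$ -- it touches the other two vertices of the triangle, not $\tilde v$. It is therefore not one of the two diagonal shaded faces $S_1,S_2$ in the checkerboard arrangement $V,S_1,W,S_2$ around $\tilde v$, and Lemma \ref{lemma:clockwise}\eqref{i:1} only forces the two terminal endpoints to lie in a \emph{common} shaded face; it does not force that common face to be both $S_1$ and $S_2$. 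A model to keep in mind is the projection graph of a $(2,3)$ torus link: at a crossing, the two triangular faces each have their opposite edge on the same third (bigon) face, and the resulting normal square -- one corner at the ideal vertex, two sides in the white triangles, one side in the third shaded face -- is a completely consistent configuration. Indeed, squares of exactly this shape (encircling a shaded bigon) are shown to exist later in the paper (Lemma \ref{lemma:encircle-bigons} and Figure \ref{fig:bigon-preimage}). So the identification $S_1=S_2$ is not ``delicate orientation bookkeeping'' as you suggest; it is simply not forced, and both your normal-bigon conclusion and your adequacy-based fallback rest on it. To repair the proof you must use the annulus structure: the interaction of $\phi(A_1)$ with the glued squares $A_2$ and $A_n$ is what produces the contradiction.
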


\begin{proof}
Suppose that $A_1$ lies in the upper polyhedron with both of its white
faces in the same polyhedral region, and both of those white faces are
triangles.  Then we may map $A_1$ to the lower polyhedron of this
polyhedral region via the clockwise (or counter-clockwise)
map. Without loss of generality, we may assume that the map $\phi$ is
clockwise in this region. Since $A_1$ is glued to $A_2$ and $A_n$,
this lower polyhedron contains both $A_2$ and $A_n$.

The square $A_2$ in a lower polyhedron runs through one shaded face,
two triangular white faces, and one ideal vertex.  By Lemma
\ref{lemma:clockwise}, part \eqref{i:3}, $A'_1 = \phi(A_1)$ is also a
normal square that passes through an ideal vertex.  Because $A_1$ is
glued to $A_2$, we have one side of $A'_1$ and one side of $A_2$ in
the same white triangle, and these sides differ by a single clockwise
rotation.  Thus $A'_1$ and $A_2$ must be as shown in Figure
\ref{fig:triangle}, left.  Note that the shaded face met by $A_2$ and
the shaded face met by $A'_1$ cannot agree: if they did, this single
shaded face would meet the white face along two edges, contradicting
\cite[Proposition 3.24]{fkp:gutsjp} (No normal bigons).  Hence the
arcs shown in that figure can connect to closed curves only if the
triangular faces labeled $V_1$ and $V_2$ actually coincide.

\begin{figure}
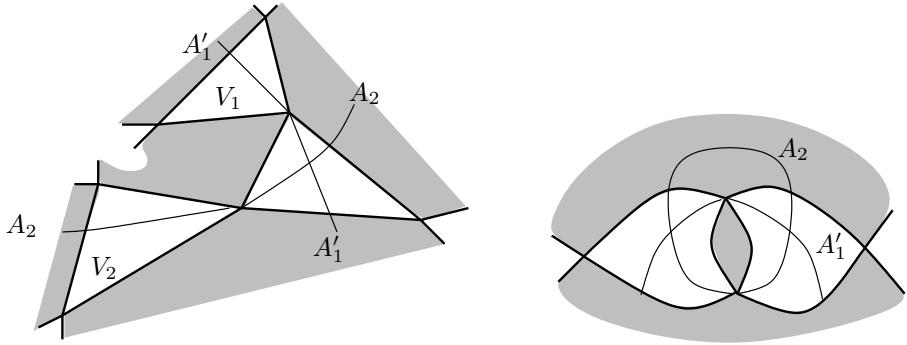

  \input{figures/triangle-face1.pstex_t}
  \hspace{.3in}
  \input{figures/triangle-face2.pstex_t}
  \caption{Configurations for triangular faces in the same polyhedral
    region.}
  \label{fig:triangle}
\end{figure}

Since $V_1 = V_2$, the configuration must be as in Figure
\ref{fig:triangle}, right.  But now, recall that $A_1$ is glued to
square $A_n$ along this white face $V_1 = V_2$.  By Lemma
\ref{lemma:more2}, the squares $A_2$ and $A_n$ are distinct.
Furthermore, since $A$ is embedded, $A_2$ and $A_n$ are disjoint.
However, the side of $A_n$ on the face $V_1=V_2$ differs from $A'_1$
by a single clockwise rotation.  It is impossible for this arc to be
disjoint from $A_2$, which is a contradiction.
\end{proof}

We can now prove the main result of this section.

\begin{prop}\label{prop:more4}
The annulus $A$ consists of at least $4$ squares. In addition, some
white face met by $A$ has at least $4$ sides.
\end{prop}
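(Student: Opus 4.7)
The first assertion, that $A$ consists of at least $4$ squares, is precisely the content of Lemma \ref{lemma:more2}. For the second assertion, I argue by contradiction, assuming that every white face met by $A$ is a triangle. Since $A$ alternates squares between the upper polyhedron and lower polyhedra, Lemma \ref{lemma:triangle-diffpoly} applied to each upper square $A_{2j-1}$ forces its two triangular white faces to lie in distinct polyhedral regions, whereas the two white faces of each lower square $A_{2j}$ must lie in a common region (the one containing $A_{2j}$). Tracing the cyclic sequence of white faces $W_1, \ldots, W_n$, the polyhedral regions therefore alternate pair-by-pair: $W_{2j-1}$ and $W_{2j}$ share a region while $W_{2j}$ and $W_{2j+1}$ belong to distinct regions.

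The central combinatorial fact I will exploit is that in any triangular white face, condition~(v) of Definition \ref{def:normal} forces every normal arc to connect an ideal vertex to the opposite edge, and any two distinct such arcs in one triangle must intersect transversely (their four endpoints interleave on the triangle's boundary). I then focus on an upper square, say $A_1$, with triangular white faces $V \subset U_V$ and $W \subset U_W$, where $U_V \neq U_W$. Applying the clockwise (or counter-clockwise) map $\phi$ of Definition \ref{def:clockwise} face-by-face to $A_1$, the arcs $\phi(A_1 \cap V)$ and $A_n \cap V$ are two distinct normal arcs in the triangle $V$ of the lower polyhedron in region $U_V$, differing by a nontrivial $2\pi/3$ rotation, so they must intersect in the interior of $V$. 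The analogous statement holds for $W$ on the $A_2$ side.

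The main obstacle will be converting these local face-by-face intersections into a global contradiction with the embeddedness of $A$. My intended strategy is to assemble the piecewise $\phi$-images of all upper squares together with the lower squares $A_{2j}$ into a normal annulus $A^\phi \subset M_\sigma$, using parts (i) and (iv) of Lemma \ref{lemma:clockwise} to ensure that the pieces glue coherently along shaded faces and remain disjoint from each other. Since $M_\sigma$ is an irreducible handlebody in which $A$ is embedded, an intersection argument in the spirit of the final paragraph of the proof of Lemma \ref{lemma:triangle-diffpoly} will then force a self-intersection of $A$, yielding the desired contradiction. I expect the most delicate step to be verifying that the face-by-face rotations assemble into a well-defined normal annulus whose combinatorics reflect the interleaving established above; this amounts to extending Lemma \ref{lemma:clockwise}(ii)--(iii) to the situation in which the two white faces of $A_{2j-1}$ lie in different polyhedral regions, and then using the triangularity hypothesis to rule out the only remaining alternative, namely that $A^\phi$ coincides with $A$ up to isotopy.
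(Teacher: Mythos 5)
Your handling of the first assertion (via Lemma \ref{lemma:more2}) and your initial reduction (via Lemma \ref{lemma:triangle-diffpoly}) match the paper, though note that the negation of the second claim also allows bigon white faces, which must be ruled out separately; the paper does this directly from condition (v) of Definition \ref{def:normal}. The genuine gap sits exactly at the step you flag as delicate, and it is not a technicality that can be patched: Lemma \ref{lemma:clockwise}(ii)--(iii) cannot be extended to an upper square $A_1$ whose white faces $V$ and $W$ lie in different polyhedral regions $U_V \neq U_W$. The clockwise map is defined region by region and sends the white faces of $U_V$ into the single lower polyhedron associated to $U_V$; hence $\phi(A_1\cap V)$ and $\phi(A_1\cap W)$ land in two \emph{different} lower polyhedra and cannot be joined along a shaded face into a normal square, let alone assembled with the lower squares into a normal annulus $A^\phi$. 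The shaded side of $A_1$ necessarily crosses the state circle(s) separating $U_V$ from $U_W$, and there is no analogue of $\phi$ for that arc. Consequently the local intersections you produce (e.g.\ of $\phi(A_1\cap V)$ with $A_n\cap V$) involve an auxiliary arc that is not part of any embedded surface, so they force no self-intersection of $A$ and yield no contradiction.

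The paper's proof abandons the clockwise map entirely in this case and instead works with the directed spine of the upper polyhedron. Lemma \ref{lemma:triangV} shows that, with $V$ triangular and the nearby state circles labelled $C_1, C_2, C_3$, the face $W$ must lie in the region $R_1$ or $R_2$ across $C_1$ or $C_2$, and the shaded side $\gamma$ of $A_1$ runs \emph{downstream} across that circle; the argument uses the Escher Stairs, Utility, and Downstream Lemmas of \cite{fkp:gutsjp}. One then pulls $A_1$ off its ideal vertex into the adjacent shaded face $F_1$ or $F_3$ to obtain a second arc $\sigma$ that also runs downstream across the same state circle and terminates in the same white face $W$, contradicting the Parallel Stairs Lemma of \cite{fkp:gutsjp}. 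Some argument of this combinatorial type (or a different mechanism altogether) is needed to close your case of triangles in distinct regions; as written, your plan does not reach a contradiction there.
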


\begin{proof}
The first claim in the proposition is proved in Lemma
\ref{lemma:more2}.  To prove the second claim, let $A_1 \subset A$ be
a normal square in the upper polyhedron.  We will show that this
particular normal square meets a white face with at least $4$ sides.

First we rule out white faces that are bigons.  In a bigon face, each
edge is adjacent to each of the two vertices.  Thus any arc from an
ideal vertex to an edge would violate condition \eqref{normal5} of
Definition \ref{def:normal}, meaning $A$ cannot be normal if it meets
a bigon face.  This contradiction implies every white face met by
$A_1$ has at least $3$ sides.

If both white faces met by $A_1$ are triangles, then Lemma
\ref{lemma:triangle-diffpoly} implies these triangles are in different
polyhedral regions.  To study this situation, we need the following
lemma.

\begin{lemma}\label{lemma:triangV}
Suppose $A_1$ is a normal square in the upper polyhedron, with one side on an ideal vertex, two sides on white faces $V$ and $W$, where
$V$ is triangular, and one side, labeled $\gamma$, on a shaded face.
Label the state circles around $V$ so that $\bdy A_i$ runs from a
vertex of $V$ on the state circle $C_1$ to a tentacle whose tail is on
the state circle $C_2$.  Then either   
\begin{enumerate}
\item $W$ is inside the region $R_1$ on
the opposite side of $C_1$ from $V$; or
\item $W$ is inside $R_2$ on the opposite
side of $C_2$ from $V$.  
\end{enumerate}
Furthermore, when we direct $\gamma$ from $V$ to $W$, it
runs across $C_1$ or $C_2$, respectively, running downstream.  See
Figure \ref{fig:triangV}.
\end{lemma}

\begin{proof}
The square $A_1$ has one side on the parabolic locus, which is a
vertex of the upper polyhedron.  Each vertex is a zig-zag.  Because
$A_1$ meets a vertex on $C_1$, part of the zig-zag must lie on $C_1$.

If all of the zig-zag lies on $C_1$, that is if the zig-zag consists of a
single bit of state surface, then $W$ lies in $R_1$ on the opposite
side of $C_1$ from $V$.

If the zig-zag contains one or more segments, then at least one
segment of the zig-zag is attached to $C_1$, on one side or the other.
If the segment is attached to $C_1$ on the side of the region $R_1$,
then $W$ must be inside $R_1$. (Otherwise, there would be a staircase
from state circle $C_1$ back to $C_1$, contradicting the Escher Stairs
Lemma \cite[Lemma 3.4]{fkp:gutsjp}.)  If the segment is attached to
the side opposite $R_1$, then because it belongs to a single vertex,
it must in fact be the segment labeled $s$ in Figure
\ref{fig:triangV}, which connects $C_1$ to $C_2$ alongside face $V$.
In this case, the zig-zag includes a portion of $C_2$, and $W$ will
lie on one side or the other of $C_2$. By the assumption that $V$ and
$W$ are in different polyhedral regions, $W$ must lie inside the
region $R_2$ on the opposite side of $C_2$ from $V$.

\begin{figure}
\input{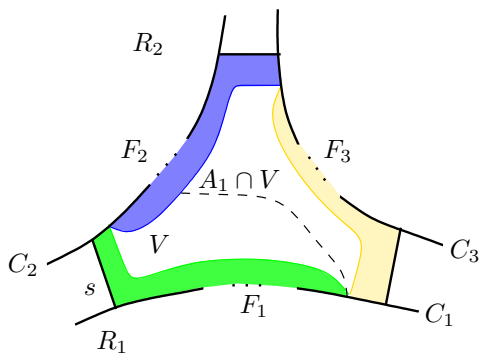}
\caption{Notation for Lemma \ref{lemma:triangV}. The conclusion of the lemma is that square $A_1$ must run through shaded face $F_2$ to a shaded face $W$ contained in region $R_1$ or region $R_2$.}
\label{fig:triangV}
\end{figure}

Now we argue that $\gamma$ runs downstream across $C_1$ or $C_2$, when
directed away from $V$ towards $W$.  As in Figure \ref{fig:triangV}, the shaded face containing $\gamma$ is called $F_2$. For ease of exposition, we also refer to $F_2$ as the blue face.  Thus $\gamma$ starts next to white face $V$ by entering a blue tentacle adjacent to $C_2$.

First suppose $W$ is in $R_2$.  If $\gamma$ crosses $C_2$ immediately
from the tail of the blue tentacle, then it must do so running
downstream, since only heads of tentacles (rather than no non-prime
switches or innermost disks) can attach to tails of
tentacles on the opposite side of a state circle.  So suppose $\gamma$ runs upstream into
the head of the blue tentacle, crossing state circle $C_3$.  Since $C_3$ does not separate $V$ and $W$, in fact
$\gamma$ must cross it twice, and the Utility Lemma \cite[Lemma
  3.11]{fkp:gutsjp} implies that $\gamma$ crosses it first running
upstream, then downstream.  Between the second time $\gamma$
crosses $C_3$ and the first time it crosses $C_2$, $\gamma$ must exit
out of every non-prime half--disk it enters, else such a disk would
separate $C_2$ and $C_3$.  But no half--disk can separate $C_2$ and $C_3$, because they are connected by a
segment. Thus the Downstream Lemma \cite[Lemma
  3.10]{fkp:gutsjp} implies $\gamma$ crosses $C_2$ running downstream.

Finally, suppose $W$ is in $R_1$.  The arc $\gamma$ begins in a blue
tentacle with head on $C_3$ and tail on $C_2$.  If $\gamma$ crosses
$C_2$ first, it will be running downstream.  But $C_2$ does not
separate $V$ and $W$ in this case, so $\gamma$ must cross it twice.
This contradicts the Utility Lemma.  Thus $\gamma$ crosses $C_3$
first, running upstream.  Again it crosses $C_3$ twice, and by the
Utility Lemma, the second crossing of $C_3$ occurs running downstream.  Then, as in the
previous paragraph, the Downstream Lemma implies that $\gamma$ crosses
$C_1$ running downstream. 
\end{proof}

Now we finish the proof of Proposition \ref{prop:more4}.

Let the notation be as in Lemma \ref{lemma:triangV}. In addition, as Figure \ref{fig:triangV}, let 
$F_i$ be the shaded face that has a tentacle lying on state circle $C_i$. Thus $\gamma$ runs through shaded face $F_2$.

To finish the proof, we pull a side of $A_1$ off the parabolic locus, i.e.\ off the ideal vertex, and into shaded face $F_1$ or $F_3$. This creates a normal square with two white sides and two shaded sides.

If $W$ is in $R_1$, pull $A_1$ off the ideal vertex and into the  tentacle of $F_1$,
to obtain an arc $\sigma \subset F_1$.  This arc $\sigma$ must run
downstream across $C_1$, by the Utility Lemma\cite[Lemma
  3.11]{fkp:gutsjp} and Downstream Lemma \cite[Lemma
  3.10]{fkp:gutsjp} 
 (as in the above argument).

If $W$ is in $R_2$, pull $A_1$ off the ideal vertex and into the  tentacle of $F_3$,
obtaining an arc $\sigma \subset F_3$.  Again the arc $\sigma$
must run downstream across $C_2$.

In either case, we have arcs $\gamma$ and $\sigma$ which run
downstream from the same state circle (either $C_1$ if $W \subset R_1$, or $C_2$ if $W \subset R_2$).  They terminate in the same white
face, namely $W$.  This contradicts the Parallel Stairs Lemma
\cite[Lemma 3.14]{fkp:gutsjp}.
\end{proof}

\subsection{Annuli and squares}\label{subsec:annuli-squares}
In the next sequence of lemmas, we use Proposition \ref{prop:more4} to
set up the proof that the state surface $S_\sigma$ has no accidental
parabolics. The overall theme of the proof is that each successive
lemma places stiffer and stiffer restrictions on the annulus $A$, the
polyhedral decomposition, and the diagram $D(K)$. In the end, we will
reach a contradiction.

So far, we have an essential annulus $A \subset M_\sigma$, composed of
normal squares $A_1, \dots, A_n$. Each of these squares has two sides
on white faces, one on a shaded face, and the final side on an ideal
vertex.

In the arguments below, it is actually easier to view the pieces of
$A$ as squares with two sides on shaded faces and two sides on white
faces.  This is accomplished as follows.  Recall that the parabolic
locus $\bdy N(K) \cut S_\sigma$ consists of annuli. One of the
boundary circles of $A$ is embedded on one of these parabolic annuli.
We may isotope $A$ slightly through $M_\sigma$, to move the boundary
circle of $A$ from the parabolic locus and onto
$\widetilde{S_\sigma}$.

In the polyhedral decomposition, the pushed-off copy of $A$ will be
cut into a collection of normal squares with two sides on white faces
and two sides on shaded faces, such that one side on a shaded face
cuts off a single ideal vertex. We denote these squares by $S_1,
\dots, S_n$.  Note each $S_i$ is obtained by pulling $A_i$ off an
ideal vertex and into an adjacent shaded face.

In fact, there are two different directions in which we may pull $A$
off the parabolic locus. We make the choice as follows.

\begin{convention}\label{conv:pull-off}
Let $V$ be a white face with four or more vertices, which meets the
annulus $A$.  (The existence of such a white face is guaranteed by
Proposition \ref{prop:more4}.)  We arrange the labeling of normal
squares $A_i$ so that square $A_1$ in the upper polyhedron is glued
along $V$ to square $A_2$ in some lower polyhedron.

The normal square $A_1$ meets a vertex of $V$, which means that one
component of $V \setminus A_1$ has two or more vertices.  We pull $A$
off the parabolic locus in the direction of this (larger) component of
$V \setminus A_1$. Thus, if $S_1$ is the normal square corresponding
to $A_1$, the arc $S_1 \cap V$ has at least two vertices on each side.
\end{convention}

\begin{lemma}\label{lemma:same-white-faces}
The annulus $A$ intersects only two white faces, $V$ and $W$, which
belong to the same polyhedral region.  Furthermore, every normal
square $S_i$ intersects $V$ and $W$ in a way that cuts off at least
two vertices on each side.
\end{lemma}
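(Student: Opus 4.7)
The strategy is to first establish the conclusion for $S_1$ --- showing that $S_1$'s second white face $W$ lies in the same polyhedral region $U$ as $V$ and that $S_1 \cap W$ also cuts $W$ with at least two vertices on each side --- and then to propagate this structure around the annulus using the clockwise map from Lemma \ref{lemma:clockwise}.

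For the base case, I would first rule out that $W$ lies in a different polyhedral region from $V$. If it did, the ideal vertex of the upper polyhedron between $V$ and $W$ along $A_1$ would correspond to a zig-zag in $H_\sigma$ with at least one interior state circle separating the two regions. I would then track the shaded side $\gamma$ of $S_1$ through tentacles, in the spirit of Lemma \ref{lemma:triangV} and the final paragraph of Proposition \ref{prop:more4}: the Escher Stairs Lemma rules out returning to the same state circle by stairs, the Utility Lemma forces $\gamma$ to cross any non-separating state circle an even number of times with controlled orientations, and the Downstream Lemma then forces $\gamma$ to cross some state circle $C$ running downstream into $W$. A second arc obtained by pulling $S_1$ off the parabolic locus in the opposite direction (into a tentacle of an adjacent shaded face) runs downstream from the same state circle $C$ and terminates in the same white face $W$, contradicting the Parallel Stairs Lemma. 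Once $V$ and $W$ lie in a common region $U$, the $\geq 2$ vertices property on $W$ follows by a symmetric analysis: if one side of $W\setminus(S_1\cap W)$ contained fewer than two vertices, then $W$ would be a bigon or a triangle with $S_1$ cutting off a single vertex on one side, returning us to cases already excluded by the normality of $S_1$ (Definition \ref{def:normal}(v)) and by Lemma \ref{lemma:triangle-diffpoly}.

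With the base case in hand, I would propagate by induction on $i$. Assume $S_i$ has white faces $V$ and $W$, both in $U$, each cut with $\geq 2$ vertices on each side. The square $S_{i+1}$ is glued to $S_i$ across one of these faces, say $V$, so $S_{i+1}$ lies in the lower polyhedron $P'_U$ on the opposite side of $V$. If $S_i$ is in the upper polyhedron, apply $\phi$ via Lemma \ref{lemma:clockwise}(ii) to obtain $\phi(S_i)\subset P'_U$ with white sides on $V$ and $W$; if $S_i$ is in a lower polyhedron, apply $\phi^{-1}$ to move to the upper polyhedron. The arcs $\phi(S_i)\cap V$ and $S_{i+1}\cap V$ differ by one clockwise rotation of $V$, and the analogous relation holds on $W$ at the $S_{i-1}$ end. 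Because $A$ is embedded and the $\phi$-images of distinct upper-polyhedron squares are disjoint (Lemma \ref{lemma:clockwise}(iv)), the only way $S_{i+1}$'s other white face can be arranged compatibly is for it to coincide with $W$ --- any other choice would force $S_{i+1}$ to intersect either $\phi(S_i)$ or the adjacent $S_j$'s, contradicting embeddedness. A direct pigeonhole count using $n_V\geq 4$ and the fact that rotation preserves the cardinality of each side of the cut shows that $S_{i+1}\cap W$ also has $\geq 2$ vertices on each side, closing the induction.

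The main obstacle is the base case: combinatorially ruling out that $V$ and $W$ lie in different polyhedral regions, and upgrading the cutting-off property from $V$ to $W$, requires delicate case-analysis with the downstream, utility, Escher stairs, and parallel stairs machinery of \cite{fkp:gutsjp}, since Convention \ref{conv:pull-off} directly constrains only the behavior of $S_1$ at $V$. The inductive propagation, by contrast, is relatively mechanical once the rotation structure of the clockwise map is leveraged against the embeddedness of $A$.
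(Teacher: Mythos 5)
There is a genuine gap, concentrated in two places. First, your inductive step gets the key mechanism backwards. You argue that $S_{i+1}$ must be disjoint from $\phi(S_i)$ ``because $A$ is embedded,'' and that this disjointness forces the second white face of $S_{i+1}$ to coincide with $W$. But $\phi(S_i)$ is not a piece of $A$ --- it is an auxiliary square obtained by pushing a piece of $A$ from the upper polyhedron into a lower one --- so embeddedness of $A$ says nothing about $S_{i+1}\cap\phi(S_i)$. In fact the opposite is true, and it is the whole point: since $S_{i+1}\cap V$ differs from $\phi(S_i)\cap V$ by a single rotation and $\phi(S_i)$ cuts off at least two vertices on each side of $V$, the two arcs are forced to \emph{cross} in $V$; the paper then invokes \cite[Lemma 4.10]{fkp:gutsjp} to conclude that the two squares must also intersect in their other white faces, and this forced crossing is precisely what identifies the second white face of $S_{i+1}$ with $W$ and transports the ``at least two vertices on each side'' property forward. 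Without that forced-intersection lemma your induction does not close.

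Second, your base case is also incomplete. The claim that if one side of $W\setminus(S_1\cap W)$ had fewer than two vertices then ``$W$ would be a bigon or a triangle with $S_1$ cutting off a single vertex on one side'' is false: $W$ could be, say, a hexagon in which $S_1\cap W$ cuts off exactly one vertex on one side, and neither normality (which only forbids cutting off \emph{zero} vertices) nor Lemma \ref{lemma:triangle-diffpoly} (which requires both faces to be triangles) rules this out. The paper obtains both the ``same polyhedral region'' statement and the two-vertices property on $W$ by citing structural results from the monograph --- \cite[Proposition 4.13]{fkp:gutsjp} and \cite[Lemma 4.10]{fkp:gutsjp} --- applied to $S_1$, which is legitimate because Convention \ref{conv:pull-off} guarantees that $S_1$ does not cut off a single ideal vertex in $V$. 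Your plan to re-derive the region statement from the Escher Stairs/Utility/Downstream/Parallel Stairs machinery transplants the argument of Lemma \ref{lemma:triangV}, but that argument leans heavily on $V$ being a triangle to pin down where the shaded arc $\gamma$ begins; for a face with four or more sides the tracking is not determined, so this portion remains a sketch rather than a proof.
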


\begin{proof}
Let $V$ be the white face of Convention \ref{conv:pull-off}, and let
$A_1$ and $S_1$ be the corresponding normal squares.  Let $W$ be the
other white face met by $S_1$. Since $S_1$ does not cut off an ideal
vertex in $V$, and is glued to square $S_2$ across $V$,
\cite[Proposition 4.13]{fkp:gutsjp} implies that $V$ and $W$ are in
the same polyhedral region $U$.\protect\footnote{In the monograph
  \cite{fkp:gutsjp}, Proposition 4.13 and Lemma 4.10 are stated for
  $A$--adequate diagrams. As \cite[Section 4.5]{fkp:gutsjp} explains,
  these results and the other structural results about the polyhedra
  also apply to $\sigma$--adequate, $\sigma$--homogeneous diagrams.}

Now, Lemma \ref{lemma:clockwise} part \eqref{i:2} says that we may map
$S_1$ into the lower polyhedron corresponding to $U$ and obtain a
normal square $S'_1 = \phi(S_1)$. Note that the arc $S_1' \cap V$ will
differ from $S_2 \cap V$ by a single rotation, by the definition of
the clockwise (or counter-clockwise) map.  Since $S_1'$ cuts off more
than a single vertex in $V$, \cite[Lemma 4.10]{fkp:gutsjp} implies
that $S_1'$ intersects $S_2$ nontrivially, in both of its white faces.
But this means that $S_2$ meets both $V$ and $W$ in arcs that cut off
more than a single vertex on each side.

The square $S_2$ is glued along $W$ to a square $S_3$ in the upper
polyhedron. The arc $S_3 \cap W$ cuts off more than a single vertex on
each side, because it is glued to $S_2$. Thus, as above,
\cite[Proposition 4.13]{fkp:gutsjp} implies that both white faces of
$S_3$ are in the same polyhedral region $U$, and \cite[Lemma
  4.10]{fkp:gutsjp} implies that $S_3' = \phi(S_3)$ intersects $S_2$
nontrivially, in both of its white faces. In other words, $S_3$ meets
the same white faces $V$ and $W$, in arcs that cut off more than a
single vertex on each side. Continue in this fashion to obtain the
same conclusion for every $S_i$.
\end{proof}

Let $S_i$ be an even-numbered square in a lower polyhedron. Lemma
\ref{lemma:same-white-faces} tells us that $S_i$ is glued to $S_{i-1}$
across $V$ and to $S_{i+1}$ across $W$, where $V$ and $W$ are the same
as $i$ varies.

\begin{define}\label{def:ti}
To continue studying the intersection patterns of normal squares
in the lower polyhedron, we define
$$
T_i =
\begin{cases}
\phi(S_i) & \mbox{if } i \mbox{ is odd} \\
S_i & \mbox{if } i \mbox{ is even.} \\
\end{cases}
$$
Note that every $T_i$ lives in the lower polyhedron of the polyhedral
region $U$.

For every square $T_i$, we label its four sides as follows. The sides
of $T_i$ in white faces $V$ and $W$ are denoted $v_i$ and $w_i$,
respectively.  One shaded side of $S_i$ was created by pulling a side
of $A_i$ off the parabolic locus; the corresponding side of $T_i$ is
denoted $p_i$. (Note that by Lemma \ref{lemma:clockwise}, part
\eqref{i:3}, if an odd-numbered square $S_i$ in the upper polyhedron
has a shaded side that cuts off an ideal vertex, then so does $T_i =
\phi(S_i)$.) We will orient the arcs $v_i$ and $w_i$ so that they
point toward $p_i$, and orient $p_i$ from $v_i$ toward $w_i$. That is,
$p_i$ is oriented from $V$ to $W$.

Similarly, an odd-numbered square $S_i$ in the upper polyhedron also
contains an arc $q_i$ that was pulled off the parabolic locus. As
before, we orient $q_i$ from $V$ to $W$.
\end{define}

\begin{lemma}\label{lemma:orientations}
Let $i$ be even, so that $S_i = T_i$ is in a lower polyhedron, and
suppose that we pulled $S_i$ off an ideal vertex that lies to the
right of $p_i$. Then
\begin{enumerate}
\item\label{orient1} $v_{i-1} = \phi(v_i)$ and $w_{i+1} = \phi(w_i)$,
  with orientations preserved.
\item\label{orient2} $p_{i \pm 1}$ cuts off an ideal vertex to its
  right.
\item\label{orient3} In the upper polyhedron, $q_{i \pm 1}$ also cuts
  off an ideal vertex to its right.
\end{enumerate}
\end{lemma}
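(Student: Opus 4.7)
The plan is to derive all three claims from (a) the orientation-preservation of $\phi$ recorded after Definition \ref{def:clockwise}, (b) the identification of shaded faces under $\phi$ in Lemma \ref{lemma:clockwise}\eqref{i:1} and \eqref{i:3}, and (c) the fact that the embedded oriented annulus $A$ is pushed off $N(K)$ in a single globally consistent direction, as dictated by Convention \ref{conv:pull-off}.

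For (1), the gluing across $V$ identifies $S_{i-1}\cap V$ with $v_i$; applying the rotation in the definition of $\phi$ then gives $v_{i-1} = T_{i-1}\cap V = \phi(v_i)$, and the same argument on $W$ yields $w_{i+1} = \phi(w_i)$. For the orientations, the endpoint of $v_i$ adjacent to the shaded face containing $p_i$ is identified via the gluing with the endpoint of $S_{i-1}\cap V$ adjacent to the shaded face containing $q_{i-1}$, since both endpoints lie on the pushed-off image of $\partial A \cap N(K)$, which meets $V$ at a single point on the common boundary of $A_{i-1}$ and $A_i$. By Lemma \ref{lemma:clockwise}\eqref{i:1}, $\phi$ takes the shaded face containing $q_{i-1}$ to the shaded face containing $p_{i-1}$; combined with orientation-preservation of the rotation $\phi$ on $V$, this shows that the orientation on $v_i$ (toward $p_i$) matches the orientation on $v_{i-1}$ (toward $p_{i-1}$).

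For (2) and (3), I plan to exploit the coherent orientation on $A$: it determines a global inward normal along $\partial A \cap N(K)$, and the isotopy in Convention \ref{conv:pull-off} pushes off in this direction. Consequently, in every square $A_j$ the cut-off ideal vertex lies on the ambient side of the pushed-off arc determined by the orientation of $A$, independently of $j$. Since the orientations on $v_i, p_i, w_i$ themselves arise from the orientation of $A$, the statement ``the ideal vertex is to the right of $p_i$'' is an intrinsic feature that propagates around $A$, so it also holds for $p_{i\pm 1}$; this gives (2). For (3), the arc $q_{i\pm 1}$ in the upper polyhedron and the arc $p_{i\pm 1}$ in the lower polyhedron correspond via $\phi$ (Lemma \ref{lemma:clockwise}\eqref{i:3} applied to the pre-pushed-off square $A_{i\pm 1}$, combined with \eqref{i:1}), and orientation-preservation of $\phi$ under the viewing conventions (inside the upper polyhedron, outside the lower polyhedron) translates ``right of $p_{i\pm 1}$'' into ``right of $q_{i\pm 1}$''.

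The main obstacle will be the orientation bookkeeping: carefully verifying that the ``right-of-$p_i$'' convention, the induced orientations on $v_i, w_i, p_i$, and the boundary orientation of $A$ all cohere in the claimed way, and that the consistent pushing-off direction really translates into consistent ``right''-ness across the polyhedra. This ultimately reduces to the orientation-preservation of $\phi$ together with the viewing conventions (inside upper, outside lower), but pinning down the exact signs will require drawing the picture with some care.
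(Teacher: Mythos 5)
Your proof follows essentially the same route as the paper's: part (1) via the gluing across $V$ and $W$ combined with the orientation--preservation of $\phi$, and parts (2) and (3) via the globally consistent direction of the push-off from the parabolic locus together with Lemma \ref{lemma:clockwise}\eqref{i:3} and the viewing conventions that make $\phi$ orientation--preserving. The extra bookkeeping you flag is exactly what the paper compresses into its short argument, and your outline is correct.
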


\begin{proof}
By construction, $v_i \subset S_i$ is glued to an arc of $S_{i-1} \cap
V$, whose image under $\phi$ is $v_{i-1}$. Similarly for $w_i$ and
$w_{i+1}$. Since $\phi$ is orientation--preserving, \eqref{orient1}
follows.

Conclusion \eqref{orient2} follows immediately from Lemma
\ref{lemma:clockwise} part \eqref{i:3} because $S_i$ was created by
pulling $A_i$ off an ideal vertex in a direction that is consistent
for all $i$. Similarly, conclusion \eqref{orient3} follows from Lemma
\ref{lemma:clockwise} part \eqref{i:3} because $\phi$ is
orientation--preserving.
\end{proof}

\begin{lemma}\label{lemma:encircle-bigons}
Each square $T_i$
encircles a bigon shaded face of the lower polyhedron.
\end{lemma}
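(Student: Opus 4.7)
The plan is to analyze $T_i$ as a normal square in the $4$-valent, checkerboard-colored lower polyhedron $P'$ of region $U$ and to identify a shaded bigon face that $T_i$ encircles. I fix notation so that, cyclically around $T_i$, the sides are $v_i \subset V$, $p_i$ on a shaded face $F$, $w_i \subset W$, and $r_i$ on a shaded face $F'$. By Lemma~\ref{lemma:orientations}, the shaded side $p_i$ cuts off a single ideal vertex $x_i$ of $F$. Because the two endpoints of $p_i$ coincide with the endpoints of $v_i$ and $w_i$ at corners of $T_i$, these endpoints lie on the edges $V$--$F$ and $W$--$F$ of $F$. In particular, $x_i$ is a vertex of both $V$ and $W$.

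Since $P'$ is $4$-valent, exactly four faces meet at $x_i$: $V$, $F$, $W$, and a fourth shaded face that I call $B$. The face $B$ sits between $V$ and $W$ on the side of $x_i$ opposite from $F$. My goal is to show that $B$ is a bigon, and that $T_i$ encircles $B$ in $\bdy P'$.

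To achieve this I would run a parallel argument for the other shaded side $r_i$. Using the structure of $A$ as a closed annulus and the gluing $v_{i-1} = \phi(v_i)$, $w_{i+1} = \phi(w_i)$ from Lemma~\ref{lemma:orientations}, I would argue that $r_i$ also cuts off a single ideal vertex $y_i$ of $F'$, with $y_i$ adjacent to both $V$ and $W$. The vertices $x_i$ and $y_i$ then lie on opposite sides of $T_i$ inside $\bdy P'$. Because $V$ and $W$ share both $x_i$ and $y_i$, the shaded face $B$ between them at $x_i$ is forced to extend to $y_i$. Combined with the absence of normal bigons \cite[Proposition~3.24]{fkp:gutsjp} and the primeness of the sub-diagram associated to $U$, this rules out additional vertices on $B$, so $B$ is a bigon with vertices $x_i$ and $y_i$. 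The loop $T_i$ then surrounds $B$: its white sides $v_i, w_i$ lie on the two white faces incident to $B$, while its shaded sides cut off the two vertices of $B$ in $F$ and $F'$.

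The main obstacle will be the symmetric analysis for $r_i$. Unlike $p_i$, whose ``cuts off one vertex'' property is built into the construction by Lemma~\ref{lemma:orientations}, the side $r_i$ descends from the original shaded side of the normal square $A_i \subset M_\sigma$ and has no such a priori structure. Pinning down $y_i$ and the face $F'$ requires exploiting the cyclic closure of the annulus $A$, the rigidity of normal squares in the lower polyhedron (notably \cite[Lemma~4.10]{fkp:gutsjp}), and the fact, proved in Lemma~\ref{lemma:same-white-faces}, that the white faces met by $T_i$ are always the same $V$ and $W$ from the same polyhedral region, with $v_i$ and $w_i$ cutting off at least two vertices on each side.
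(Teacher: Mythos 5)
Your proposal attacks the lemma one square at a time, whereas the paper's proof is fundamentally about the interaction of \emph{three consecutive} squares, and the step you flag as ``the main obstacle'' is exactly the step that cannot be carried out square-by-square. In the paper, because $S_{i\pm1}$ is glued to $S_i$ along a white face and the clockwise map differs from the gluing map by a single rotation, \cite[Lemma 4.10]{fkp:gutsjp} forces each of $T_{i-1}$ and $T_{i+1}$ to intersect $T_i$ in \emph{both} white faces $V$ and $W$; on the other hand, $T_{i-1}$ and $T_{i+1}$ are disjoint, being clockwise images of disjoint squares (Lemma \ref{lemma:clockwise} \eqref{i:4}). Combined with the fact that all the arcs $p_j$ cut off ideal vertices on a consistent side (Lemma \ref{lemma:orientations}), these intersection and disjointness constraints force the rigid configuration of Figure \ref{fig:marcfig12}, in which $p_{i-1}$ runs parallel to a shaded side of $T_{i+1}$ through a single shaded face; only at that point does primeness of the lower polyhedron collapse the relevant faces to bigons.

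Your single-square plan therefore has two concrete gaps. First, you never establish that the second shaded side $r_i$ cuts off a single ideal vertex $y_i$: unlike $p_i$, whose behavior is built in by construction, $r_i$ descends from the original shaded side of $A_i$ and, viewed in isolation, could run through a large shaded face cutting off many vertices; the constraints that forbid this come precisely from the neighboring squares, which your argument invokes only as a gesture. Second, even granting $y_i$, you assert rather than prove that the fourth face $B$ at $x_i$ coincides with the fourth face at $y_i$ and carries no further vertices; the appeal to primeness requires exhibiting an explicit simple closed curve meeting the diagram twice, which the paper obtains from the parallel arcs in Figure \ref{fig:marcfig12}. (There is also a sign issue: for $T_i$ to encircle $B$, the vertices $x_i$ and $y_i$ must lie in the \emph{same} complementary disk of $T_i$ in $\bdy P'$, not on opposite sides as you state.) You have correctly identified the candidate bigon as the fourth face at the cut-off vertex, but as written the proposal does not prove the lemma.
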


\begin{proof}
Assume without loss of generality that $V$ and $W$ are in an all--$A$
polyhedral region.  We may also assume without loss of generality that
$p_2$ was created by pulling $A_2$ off an ideal vertex so that the
vertex lies to the right of $p_2$.  (Otherwise, interchange the labels
of faces $V$ and $W$, reversing the order of the indices and the
orientation on every $p_i$.)

By Lemma \ref{lemma:orientations}, the arc $v_1$ is clockwise from
$v_2$ in face $V$, and $w_3$ is clockwise from $w_2$ in face $W$.
Moreover, $v_2$ intersects both $v_1$ and $v_3$, and similarly $w_2$
intersects both $w_1$ and $w_3$.  But $T_1 = \phi(S_1)$ and $T_3 =
\phi(S_3)$ are clockwise images of disjoint squares, hence are
disjoint by Lemma \ref{lemma:clockwise} \eqref{i:4}.  Thus $T_1$,
$T_2$, and $T_3$ must be as shown in Figure \ref{fig:marcfig12}.  In
particular, $p_1$ and $T_3$ run parallel through the same shaded face.
Dotted lines in the figure indicate that the boundary of the
corresponding shaded face may meet additional vertices. 

% \marginpar{EK: Do we need ``LEft" in caption of  Figure \ref{fig:marcfig12}?}
% DF: changed the caption.

\begin{figure}
  \input{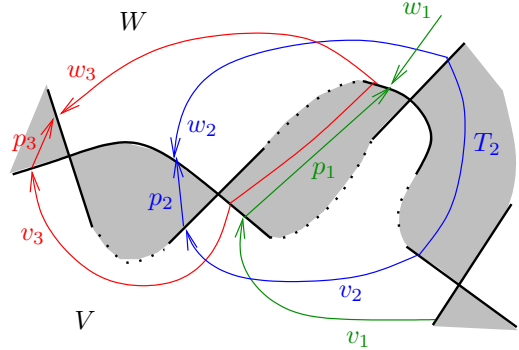}
  \caption{Proof of Lemma \ref{lemma:encircle-bigons}: squares $T_1$, $T_2$, and $T_3$ must meet a lower
    polyhedron as shown.}
  \label{fig:marcfig12}
\end{figure}

The arc $p_2$ cuts off an ideal vertex to its right, so by Lemma
\ref{lemma:orientations}, the arcs $p_1$ and $p_3$ also cut off ideal
vertices to their right.  Thus the dotted line to the right of $p_1$
in Figure \ref{fig:marcfig12} must actually be solid.  By primeness of
the lower polyhedron, all other dotted lines must also be solid.  Thus
both $T_2$ and $T_3$ each encircle a single bigon shaded face.

We may repeat the above argument with $T_{2k}$ taking the place of
$T_2$, for any $k$, hence each $T_i$ encircles a bigon.
\end{proof}

\begin{lemma}\label{lemma:2q-link}
The white faces $V$ and $W$ met by annulus $A$ are the only white
faces of the polyhedral decomposition.  As a consequence, $D(K)$ is
the standard diagram of a $(2,n)$ torus link, and $S_\sigma$ is an
annulus.
\end{lemma}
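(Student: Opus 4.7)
The plan is to leverage the bigon structure from Lemma \ref{lemma:encircle-bigons}, combine it with the closed-annulus condition on $A$, and then invoke primeness of $D(K)$ to deduce the unique possible configuration.

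First, by Lemma \ref{lemma:encircle-bigons}, each square $T_i$ encircles a distinct bigon shaded face $b_i$ of the lower polyhedron corresponding to $U$ (distinctness follows because the $T_i$ are pairwise disjoint pieces of the embedded annulus $A$). Each $b_i$ lies between $V$ and $W$, and Figure \ref{fig:marcfig12} shows that consecutive bigons $b_i$ and $b_{i+1}$ share a single vertex on $V$ or $W$. Because $A$ is a closed annulus, the indices cycle: after $n$ steps we return to $T_1$, so the arcs $v_1,\ldots,v_n$ on $V$ (and the analogous $w_i$ on $W$) occur in cyclic order around their respective faces. Hence the $n$ bigons $b_1,\ldots,b_n$ form a single closed cyclic chain sandwiched between $V$ and $W$.

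Next, this configuration pins down the alternating sub-diagram $D'$ associated to the polyhedral region $U$. Each bigon $b_i$ is bounded by exactly two edges, one on $V$ and one on $W$, so the boundary cycles of $V$ and $W$ each consist of exactly $n$ edges, and together the two faces $V$, $W$, and the $n$ bigons exhaust the sphere $S^2$. There is only one planar alternating picture with this combinatorics: the standard diagram of a $(2,n)$ torus link, in which two strands twist around one another $n$ times, the twists producing the bigon chain and $V$, $W$ being the two flanking faces. Moreover, in this picture, $V$ and $W$ are the only non-innermost-disk complementary regions of $H_\sigma$, so they are the only white faces of the upper polyhedron contained in $U$.

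Finally, the lemma asserts that $D(K)$ itself — and not merely the sub-diagram $D'$ — is this $(2,n)$ torus link diagram. This requires ruling out additional crossings, state circles, or non-prime arcs located inside $V$ or $W$, which would produce further polyhedral regions and additional white faces. Since the cyclic chain of bigons together with $V$ and $W$ already exhausts $S^2$, any hidden state circle or non-prime arc inside $V$ or $W$ would force the existence of a simple closed curve meeting $D(K)$ in two points but bounding crossings on both sides, contradicting primeness of $D(K)$ (and also violating the assumption that the collection $\{\alpha_i\}$ of non-prime arcs is maximal). Thus $V$ and $W$ are the only white faces of the polyhedral decomposition, $D(K) = D'$ is the standard $(2,n)$ torus link diagram, and $S_\sigma$ is the associated state surface — two state disks joined by $n$ half-twisted bands — which is topologically an annulus. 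The main obstacle is the last step: carefully exploiting primeness of $D(K)$ to certify that no extra combinatorial structure can hide inside the regions $V$ or $W$ that are not already detected by the annulus $A$.
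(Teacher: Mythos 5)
Your first half is essentially the paper's: from Lemma \ref{lemma:encircle-bigons} and the cyclic gluing pattern of the $T_i$ you correctly conclude that the shaded faces of the lower polyhedron associated to $U$ are $n$ bigons chained end to end, so that $V$ and $W$ are the only white faces of that lower polyhedron and its $1$--skeleton is the standard $(2,n)$ torus link picture. Up to that point you agree with the paper.

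The gap is in the last step, and it is a real one. You locate the hypothetical extra structure ``inside $V$ or $W$,'' but $V$ and $W$ are by definition complementary regions of $H_\sigma \cup (\cup_i \alpha_i)$, so nothing of the diagram can hide there; the place where additional crossings and polyhedral regions can live is on the \emph{far sides of the state circles} bounding the region $U$. The sphere you say is ``exhausted'' by $V$, $W$ and the bigons is the boundary of the lower polyhedron, not the projection plane of $D(K)$, so exhausting it says nothing about the rest of the diagram. Consequently your appeal to primeness is not yet available: to produce a simple closed curve meeting $D(K)$ in exactly two points you first need to know that the ideal vertices cut off by the squares correspond to unbroken zig-zags (single arcs of state circle with no segments attached). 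The paper obtains this from the orientation bookkeeping of Lemma \ref{lemma:orientations}: each arc $q_i$ in the upper polyhedron cuts off an ideal vertex to its \emph{right}, which forces the adjacent portions of state circles to be solid, and only then do the two closed curves meeting the diagram twice appear, so that primeness shows the consecutive state circles are innermost; iterating over the odd-indexed squares eliminates all other polyhedral regions. Without that orientation step your argument does not rule out, say, a segment attached to the opposite side of one of the state circles, which would create another polyhedral region and additional white faces while leaving your bigon chain intact.
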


\begin{proof}
Recall that by Lemma \ref{lemma:same-white-faces}, there is a
polyhedral region $U$ containing white faces $V$ and $W$, such that
every normal square $S_i$ passes through $V$ and $W$. These normal
squares define squares $T_i$ in the lower polyhedron, as in Definition
\ref{def:ti}. By Lemma \ref{lemma:encircle-bigons}, every $T_i$
encircles a bigon shaded face of this lower polyhedron. The number of
these bigons is $n$, the same as the number of normal squares in $A$.

This is enough to conclude that all the shaded faces of the lower
polyhedron corresponding to $U$ are bigons, chained end to end.  Thus
$V$ and $W$ are the only white faces of this lower polyhedron.  The
$1$--skeleton of this lower polyhedron coincides with the standard
diagram of a $(2,n)$ torus link, as on the left of Figure
\ref{fig:bigon-preimage}.

If the diagram $D(K)$ is prime and alternating, there is only one
lower polyhedron, whose $1$--skeleton corresponds to $D(K)$. Thus
$D(K)$ is the standard diagram of a $(2,n)$ torus link, where $n$ is
even. The rest of the argument reduces us to this case.

In the general case, the upper polyhedron may be more complicated.
However, one polyhedral region in the upper polyhedron looks like that
of a $(2,n)$ torus link, as in the middle panel of Figure
\ref{fig:bigon-preimage}.  A priori, there may be additional segments
attached to the opposite sides of all state circles involved.  This is
indicated in that figure by the dashed lines along state circles.

For each square $T_i$ in the lower polyhedron, label three sides of
$T_i$ by $v_i$, $w_i$, and $p_i$, as in Lemma
\ref{lemma:orientations}.  Focusing attention on $T_2 = S_2$, we may
assume that arc $p_2$ in a shaded face was pulled off an ideal vertex
to its right. (Otherwise, as in Lemma \ref{lemma:encircle-bigons},
switch the labels of $V$ and $W$.)  Applying Lemma
\ref{lemma:orientations} part \eqref{orient2} inductively, we conclude
that for each even index $j$, arc $p_j$ was pulled off an ideal vertex
to its right.

Now, let $i$ be an odd index, so that $S_i$ is a square in the upper
polyhedron. Since $T_i$ encircles an ideal bigon, as in Figure
\ref{fig:bigon-preimage},  the clockwise preimage $S_i =
\phi^{-1}(T_i)$ must be as in the middle panel of Figure
\ref{fig:bigon-preimage}.  By Lemma \ref{lemma:orientations}, the arc
$q_i$ of $S_i$ that was pulled off the parabolic locus must cut off an
ideal vertex to its right.  This means that portions of state circles
adjacent to $q_i$ to its right must actually be solid, to form a
single zig-zag, with no segments to break it up.  In other words, we
have the third panel of Figure \ref{fig:bigon-preimage}.
The third panel of Figure \ref{fig:bigon-preimage} shows two dotted
closed curves, each meeting  the  link diagram  exactly
twice.  Using the hypothesis that the diagram is prime, each of these
closed curves cannot enclose segments (which would correspond to
crossings of the diagram).  

\begin{figure}
\input{figures/bigon-lower.pstex_t}
\hspace{.2in}
\input{figures/bigon-upper1.pstex_t}
\hspace{.2in}
\includegraphics{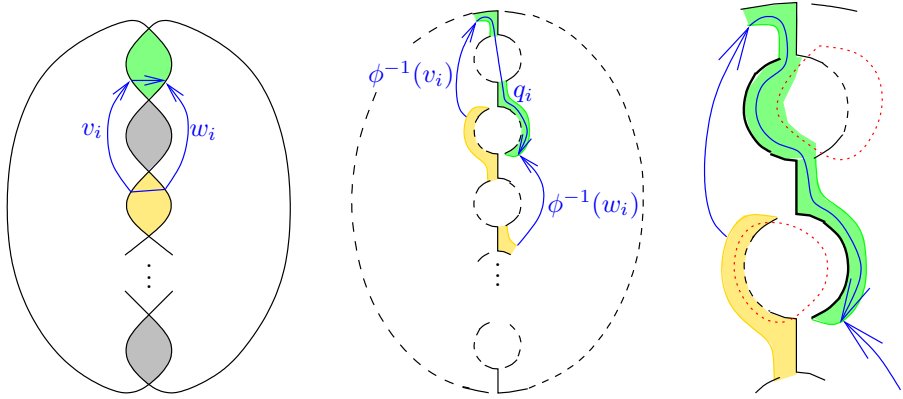}
\caption{Proof of Lemma \ref{lemma:2q-link}. Left:
$T_i$ encircles a bigon in the lower polyhedron.  Center:
The clockwise preimage  $S_i =
\phi^{-1}(T_i)$ in the upper polyhedron. Right: 
The ideal vertex cut off by $S_i$.}
\label{fig:bigon-preimage}
\end{figure}

We conclude that two consecutive state circles in $H_\sigma$ are
innermost, and contain no additional polyhedral regions. Repeating the
same argument for the next odd-numbered square $S_{i+2}$ leads to the
conclusion that the next two state circles in $H_\sigma$ are also
innermost. Continuing in this way, we conclude that there is only one
polyhedral region, which corresponds to the diagram of a $(2,n)$ torus
link.
\end{proof}

\subsection{Completing the proofs}\label{subsec:completing}

We are now ready to prove Theorem \ref{thm:noaccidental} and Theorem
\ref{thm:quasifuch}.

\begin{proof}[Proof of Theorem \ref{thm:noaccidental}]
Suppose that $S_\sigma$ has an accidental parabolic.  Then Lemma
\ref{lemma:annulus} implies there is an embedded essential annulus $A
\subset S^3 \cut S_\sigma$.  By Lemma \ref{lemma:same-white-faces},
$A$ intersects only two white faces, $V$ and $W$.  By Lemma
\ref{lemma:2q-link}, $V$ and $W$ are the only faces of the polyhedral
decomposition, hence $D(K)$ is the standard diagram of a $(2,n)$ torus
link and $S_\sigma$ is an annulus.

Note that the only non-trivial simple closed curve in an annulus is
boundary--parallel. Therefore, the component of $\bdy A$ that lies on
$\widetilde{S_\sigma}$ is actually parallel to $\bdy
\widetilde{S_\sigma}$. This contradicts the assumption that $A$ is an
essential annulus realizing an accidental parabolic.
\end{proof}

%%%%%%%%%%%%%%%%%%%%%%%%%%%%%%%%%%%%%%%%%%%%%%%%%%%%%%%%%%%%%%%%%

\begin{proof}[Proof of Theorem \ref{thm:quasifuch}] 
By \cite[Theorem 3.25]{fkp:gutsjp}, $S_{\sigma}$ is essential in $S^3
\setminus K$, and by Theorem \ref{thm:noaccidental} it has no
accidental parabolics.  By \cite[Theorem 5.21]{fkp:gutsjp} (or
\cite{futer:fiber}) $S_{\sigma}$ is a fiber in $S^3 \setminus K$ if
and only if $G_\sigma'$ is a tree.  Furthermore, by \cite[Theorem
  5.21]{fkp:gutsjp}, if $S_\sigma$ lifts to a fiber in a double cover
of $S^3 \setminus K$, then $M_\sigma$ is an $I$--bundle, hence
$G_\sigma'$ is a tree.

It follows that if $K$ is hyperbolic, the surface $S_\sigma$ is
quasifuchsian if and only if the reduced state graph $G'_\sigma$ is
not a tree.
\end{proof}

\bibliographystyle{hamsplain} \bibliography{biblio}

\providecommand{\bysame}{\leavevmode\hbox to3em{\hrulefill}\thinspace}
\providecommand{\href}[2]{#2}
\begin{thebibliography}{10}

\bibitem{adams:quasi-fuchsian}
Colin~C. Adams, \emph{Noncompact {F}uchsian and quasi-{F}uchsian surfaces in
  hyperbolic 3--manifolds}, Alebr. Geom. Topol. \textbf{7} (2007), 565--582.

\bibitem{bonahonends}
Francis Bonahon, \emph{Bouts des vari\'et\'es hyperboliques de dimension
  {$3$}}, Ann. of Math. (2) \textbf{124} (1986), no.~1, 71--158.

\bibitem{canarynotes}
Richard~D. Canary, David B.~A. Epstein, and Paul Green, \emph{Notes on notes of
  {T}hurston}, Analytical and geometric aspects of hyperbolic space
  ({C}oventry/{D}urham, 1984), London Math. Soc. Lecture Note Ser., vol. 111,
  Cambridge Univ. Press, Cambridge, 1987, pp.~3--92.

\bibitem{knotinfo}
Jae~Choon Cha and Charles Livingston, \emph{Knotinfo: Table of knot
  invariants}, 2011, http://www.indiana.edu/$~$knotinfo.

\bibitem{cooper-long}
Daryl Cooper and Darren~D. Long, \emph{Some surface subgroups survive surgery},
  Geom. Topol. \textbf{5} (2001), 347--367 (electronic).

\bibitem{cromwell}
Peter~R. Cromwell, \emph{Homogeneous links}, J. London Math. Soc. (2)
  \textbf{39} (1989), no.~3, 535--552.

\bibitem{dasbach-futer...}
Oliver~T. Dasbach, David Futer, Efstratia Kalfagianni, Xiao-Song Lin, and
  Neal~W. Stoltzfus, \emph{The {J}ones polynomial and graphs on surfaces},
  Journal of Combinatorial Theory Ser. B \textbf{98} (2008), no.~2, 384--399.

\bibitem{dasbach-lin:head-tail}
Oliver~T. Dasbach and Xiao-Song Lin, \emph{On the head and the tail of the
  colored {J}ones polynomial}, Compositio Math. \textbf{142} (2006), no.~5,
  1332--1342.

\bibitem{fenley:qf-seifert}
S{\'e}rgio~R. Fenley, \emph{Quasi-{F}uchsian {S}eifert surfaces}, Math. Z.
  \textbf{228} (1998), no.~2, 221--227.

\bibitem{futer:fiber}
David Futer, \emph{{Fiber detection for state surfaces}}, 2012,
  \mbox{arXiv:1201.1643}.

\bibitem{fkp:survey}
David Futer, Efstratia Kalfagianni, and Jessica~S. Purcell, \emph{Jones
  polynomials, volume, and essential knot surfaces: a survey},
  \mbox{arXiv:1110.6388}, Proceedings of Knots in Poland III, Banach Center
  Publications, to appear.

\bibitem{fkp:gutsjp}
\bysame, \emph{Guts of surfaces and the colored {J}ones polynomial}, Research
  Monograph, Lecture Notes in Mathematics, Vol. 2069, to appear,
  \mbox{arXiv:1108.3370}.

\bibitem{haken:normal}
Wolfgang Haken, \emph{Theorie der {N}ormalfl\"achen}, Acta Math. \textbf{105}
  (1961), 245--375.

\bibitem{Jaco}
William Jaco, \emph{Lectures on three-manifold topology}, CBMS Regional
  Conference Series in Mathematics, vol.~43, American Mathematical Society,
  Providence, R.I., 1980.

\bibitem{KaufJones}
Louis~H. Kauffman, \emph{State models and the {J}ones polynomial}, Topology
  \textbf{26} (1987), no.~3, 395--407.

\bibitem{lackenby:volalt}
Marc Lackenby, \emph{The volume of hyperbolic alternating link complements},
  Proc. London Math. Soc. (3) \textbf{88} (2004), no.~1, 204--224, With an
  appendix by Ian Agol and Dylan Thurston.

\bibitem{lick-thistle}
W.~B.~Raymond Lickorish and Morwen~B. Thistlethwaite, \emph{Some links with
  nontrivial polynomials and their crossing-numbers}, Comment. Math. Helv.
  \textbf{63} (1988), no.~4, 527--539.

\bibitem{MastersZhang}
Joseph~D. Masters and Xingru Zhang, \emph{Closed quasi-{F}uchsian surfaces in
  hyperbolic knot complements}, Geom. Topol. \textbf{12} (2008), no.~4,
  2095--2171.

\bibitem{MenascoReid}
William Menasco and Alan~W. Reid, \emph{Totally geodesic surfaces in hyperbolic
  link complements}, Topology '90 ({C}olumbus, {OH}, 1990), Ohio State Univ.
  Math. Res. Inst. Publ., vol.~1, de Gruyter, Berlin, 1992, pp.~215--226.

\bibitem{ozawa}
Makoto Ozawa, \emph{Essential state surfaces for knots and links}, J. Aust.
  Math. Soc. \textbf{91} (2011), no.~3, 391--404.

\bibitem{przytycki-survey}
J{\'o}zef~H. Przytycki, \emph{From {G}oeritz matrices to quasi-alternating
  links}, The mathematics of knots, Contrib. Math. Comput. Sci., vol.~1,
  Springer, Heidelberg, 2011, pp.~257--316.

\bibitem{thist-tsviet}
Morwen Thistlethwaite and Anastasiia Tsvietkova, \emph{An alternateive approach
  to hyperbolic structures on link complements}, \mbox{arXiv:1108.0510}.

\bibitem{thi:adequate}
Morwen~B. Thistlethwaite, \emph{On the {K}auffman polynomial of an adequate
  link}, Invent. Math. \textbf{93} (1988), no.~2, 285--296.

\bibitem{thurston:notes}
William~P. Thurston, \emph{The geometry and topology of three-manifolds},
  Princeton Univ. Math. Dept. Notes, 1979.

\bibitem{tsutsumi}
Yukihiro Tsutsumi, \emph{Hyperbolic knots spanning accidental {S}eifert
  surfaces of arbitrarily high genus}, Math. Z. \textbf{246} (2004), no.~1-2,
  167--175.

\bibitem{tsvietthesis}
Anastasiia Tsvietkova, \emph{Hyperbolic structures from link diagrams}, Ph.D.
  thesis, University of Tennessee, 2012.

\end{thebibliography}

\end{document}